\documentclass[11pt]{article}

\usepackage[margin=1.02in]{geometry}
\usepackage{amsmath,amssymb,amsthm,mathtools}
\usepackage{enumitem}
\usepackage{microtype}
\usepackage[T1]{fontenc}
\usepackage{lmodern}
\usepackage{booktabs}
\usepackage{array}
\usepackage{hyperref}
\usepackage[T1]{fontenc}
\usepackage[utf8]{inputenc}
\usepackage{lmodern}
\usepackage{microtype}
\usepackage{amsmath,amssymb,amsthm,mathtools}
\usepackage{mathrsfs}
\usepackage{enumitem}
\usepackage{booktabs}
\usepackage{aliascnt}
\usepackage{hyperref}
\usepackage[nameinlink,capitalise]{cleveref}

\hypersetup{
  colorlinks=true,
  linkcolor=blue,
  citecolor=blue,
  urlcolor=blue,
  pdftitle={Baer Splitting Beyond tau-q-Semisimplicity},
  pdfauthor={Author name(s) to be inserted}
}

\newtheorem{theorem}{Theorem}[section]

\newaliascnt{proposition}{theorem}
\newtheorem{proposition}[proposition]{Proposition}
\aliascntresetthe{proposition}

\newaliascnt{lemma}{theorem}
\newtheorem{lemma}[lemma]{Lemma}
\aliascntresetthe{lemma}

\newaliascnt{corollary}{theorem}
\newtheorem{corollary}[corollary]{Corollary}
\aliascntresetthe{corollary}

\theoremstyle{definition}
\newaliascnt{definition}{theorem}
\newtheorem{definition}[definition]{Definition}
\aliascntresetthe{definition}

\newaliascnt{example}{theorem}

\aliascntresetthe{example}

\theoremstyle{remark}
\newaliascnt{remark}{theorem}
\newtheorem{remark}[remark]{Remark}
\aliascntresetthe{remark}

\newaliascnt{question}{theorem}

\aliascntresetthe{question}

\crefname{theorem}{Theorem}{Theorems}
\crefname{proposition}{Proposition}{Propositions}
\crefname{lemma}{Lemma}{Lemmas}
\crefname{corollary}{Corollary}{Corollaries}
\crefname{definition}{Definition}{Definitions}
\crefname{remark}{Remark}{Remarks}
\crefname{question}{Question}{Questions}
\crefname{example}{Example}{Examples}

\newcommand{\Reg}{\operatorname{Reg}}
\newcommand{\Min}{\operatorname{Min}}
\newcommand{\Spec}{\operatorname{Spec}}
\newcommand{\pd}{\operatorname{pd}}
\newcommand{\Tor}{\operatorname{Tor}}
\newcommand{\Ext}{\operatorname{Ext}}
\newcommand{\Hom}{\operatorname{Hom}}
\newcommand{\Ann}{\operatorname{Ann}}

\newcommand{\EC}{\mathcal E}
\newcommand{\B}{\mathcal B}
\newcommand{\Tors}{\mathcal T_{\rm reg}}

\title{Baer Splitting Beyond $\tau_q$-Semisimplicity}
\medskip
\author{Xiaolei Zhang$^{a}$,  Guocheng Dai$^{b,\dag}$\\
	\small $a.$ School of Mathematics and Statistics, Tianshui Normal University,
	Tianshui 741001, China\\	
	\small $b.$ School of Mathematical Sciences, Sichuan Normal University,
	Chengdu 610066, China\\
	\small $\dag$\ Corresponding author. E-mail addresses: (Guocheng Dai)  465028571@qq.com}

\begin{document}
\maketitle
\begin{abstract}
For a commutative ring $R$, we call an $R$-module $B$ Baer if
$\Ext_R^1(B,T)=0$ for torsion $R$-module $T$.  It is known that every Baer module is
projective when $R$ is $\tau_q$-semisimple, i.e., rings whose total rings of quotients are semisimple. And it was conjectured that the
converse characterizes $\tau_q$-semisimple rings.  We show that the converse
fails in two substantially different ways.

First, the failure is already systematic in the non-reduced Noetherian case.
If $D$ is a Dedekind domain which is not a field and
$R=D[\varepsilon]/(\varepsilon^2)$, then every Baer $R$-module is projective,
although $R$ is not reduced, and thus is not $\tau_q$-semisimple.

Second, the converse fails even among reduced rings, and in fact among reduced
coherent B\'ezout rings.  The main tool is a countable-ring criterion. For a countable Noetherian domain $D$, let $\EC(D)$ be the ring of eventually
constant sequences over $D$.  If $D$ is not a field, then every Baer
$\EC(D)$-module is projective, whereas $\EC(D)$ is reduced and has countably
infinitely many minimal prime ideals, and thus is not $\tau_q$-semisimple.
\end{abstract}

\medskip
\noindent\textbf{Keywords.} Baer module,  $\tau_q$-semisimple ring, Mittag--Leffler condition, singular compactness, reduced ring.

\medskip
\noindent\textbf{2020 Mathematics Subject Classification.} 13C10, 13C11.
\maketitle

\section{Introduction}

Let $R$ be a commutative ring with identity and let $\Reg(R)$ denote the
multiplicative set of regular elements of $R$.  An $R$-module $T$ is called
\emph{regular-torsion} if for every $t\in T$ there exists $s\in\Reg(R)$ with
$st=0$.  Following the extension of the classical terminology from domains,
an $R$-module $B$ will be called a \emph{Baer module} if
\[
    \Ext_R^1(B,T)=0
\]
for every regular-torsion module $T$.

For domains, Kaplansky's Baer splitting problem asked whether every Baer module
is projective.  The problem was solved affirmatively by Angeleri H\"ugel,
Bazzoni and Herbera \cite{ABH}, after the structural reduction of
Eklof--Fuchs--Shelah \cite{EFS}.  A recent extension to rings with zero
divisors proved that every Baer module is projective over a
$\tau_q$-semisimple ring \cite{ZKBaer}.  The same work proposed the converse:
\begin{equation}\label{eq:conjecture}
  \text{every Baer $R$-module is projective}
  \quad\Longrightarrow\quad
  R\text{ is $\tau_q$-semisimple}.
\end{equation}
The relevance of reducedness is immediate from the characterization
\begin{equation}\label{eq:tauq-char}
 R\text{ is $\tau_q$-semisimple}
 \iff
 R\text{ is reduced and }\Min(R)\text{ is finite},
\end{equation}
proved in \cite{Ztauq}; equivalently, the total quotient ring is semisimple.
Accordingly, there are two logically distinct ways in which the proposed
converse might fail: nilpotents may already destroy the necessity of
$\tau_q$-semisimplicity, or the converse may fail even after reducedness is
imposed.

The first purpose of the present paper is to make the non-reduced phenomenon
systematic.  We prove that if $D$ is any Dedekind domain which is not a field,
then
\[
   R_D=D[\varepsilon]/(\varepsilon^2)
\]
is a Noetherian non-reduced ring over which every Baer module is projective.
When $D$ is a principal ideal domain, every Baer $R_D$-module is even free.
The key point is that regular-torsion over $R_D$ is exactly torsion over $D$.
Thus a Baer $R_D$-module is projective after restriction of scalars to $D$.
The Baer condition then detects the homology of the square-zero endomorphism
induced by $\varepsilon$, forcing
$\ker(\varepsilon)=\varepsilon B$; the hereditary property of a Dedekind
domain completes the lifting from $D$-projectivity to $R_D$-projectivity.
This argument also clarifies why the original example
$\mathbb Z[\varepsilon]/(\varepsilon^2)$ works and shows that it is not an
isolated accident.  We further prove finite-product stability, producing
non-reduced Noetherian counterexamples with any prescribed positive finite
number of minimal primes.

The second and stronger purpose is to show that reducedness does not rescue
the conjecture.  Our basic reduced counterexample is
\begin{equation}\label{eq:RZ}
 R_{\mathbb Z}
 =\left\{(a_n)_{n\ge 1}\in\mathbb Z^{\mathbb N}:
          (a_n)\text{ is eventually constant}\right\}.
\end{equation}
It is reduced and has infinitely many minimal primes, but every Baer
$R_{\mathbb Z}$-module is projective.  More strongly, $R_{\mathbb Z}$ is a
coherent B\'ezout ring.  Hence adding reducedness, coherence, or B\'ezoutness
to \eqref{eq:conjecture} does not make the converse true.

The proof of the reduced example is not a formal consequence of the
$\tau_q$-semisimple argument.  Indeed, the total quotient ring of
$R_{\mathbb Z}$ is a non-Artinian von Neumann regular ring, so
$R_{\mathbb Z}$ is not subperfect.  Consequently, the subperfect tight-system
theorem used in \cite{ZKBaer} is unavailable.  We replace it with a
countability argument: the Gruson--Jensen cardinality bound gives projective
dimension at most one for every flat module over a countable ring
\cite{GrusonJensen}; countable purification then provides the tight systems
needed for the Eklof--Fuchs--Shelah machinery.

The paper is organized as follows.  In \Cref{sec:prelim} we first analyze the
non-reduced case, generalizing the dual-number example from $\mathbb Z$ to
Dedekind domains and to finite products.  In \Cref{sec:criterion} we establish
a general Baer splitting criterion for countable rings.  In \Cref{sec:EC} we
apply it to eventually constant sequence rings over countable Noetherian
domains.  We also give consequences for $\tau_q$-theory and show that our main
reduced example is $\tau_q$-VN regular although it is not
$\tau_q$-semisimple.  We finish with several questions suggested by the proof.

\section{A Counterexample in Non-Reduced Rings}\label{sec:prelim}

Throughout the paper all rings are commutative with identity and all modules
are unital.  Put $S=\Reg(R)$ and denote by $\Tors$ the class of
regular-torsion $R$-modules.  Thus
\[
 \B={}^\perp\Tors
 :=\{B:\Ext_R^1(B,T)=0\text{ for all }T\in\Tors\}
\]
is the class of Baer modules.

A ring $R$ is called $\tau_q$-semisimple provided that its total quotient ring
$Q(R):=R_{\Reg(R)}$ is semisimple.  By \cite{Ztauq},
\begin{equation}\label{eq:tauq-char-prelim}
 R\text{ is $\tau_q$-semisimple}
 \iff
 R\text{ is reduced and }\Min(R)\text{ is finite}.
\end{equation}
Moreover, \cite{ZKBaer} proves that every Baer module is projective over a
$\tau_q$-semisimple ring and proposes the converse
\begin{equation}\label{eq:conjecture-prelim}
 \text{every Baer $R$-module is projective}
 \quad\Longrightarrow\quad
 R\text{ is $\tau_q$-semisimple}.
\end{equation}
We begin by showing that failure of this converse in the presence of
nilpotents is not confined to a single example.

\begin{lemma}\label{lem:torsion-injective-detects}
Let $D$ be a domain which is not a field.  For every nonzero $D$-module $M$
there exist a torsion injective $D$-module $E$ and a nonzero homomorphism
$M\to E$.
\end{lemma}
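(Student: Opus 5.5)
Let $K$ be the quotient field of $D$ and fix a nonzero non-unit $s\in D$; such an $s$ exists because $D$ is not a field. Every torsion injective module we build will be the torsion submodule of an injective module. The point is that for any injective $D$-module $I$ over a domain, the torsion submodule $t(I)$ is again injective. This can be checked with Baer's criterion. Take an ideal $\mathfrak a$ and a map $f:\mathfrak a\to t(I)$, extended to $g:D\to I$. If $\mathfrak a\neq 0$, pick $0\neq a\in\mathfrak a$. Then $f(a)$ is torsion, so $g(1)$ is killed by $a\cdot(\text{some nonzero element})$, hence $g(1)\in t(I)$. If $\mathfrak a=0$ the extension by $0$ works. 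So $t(I)$ is torsion and injective.

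I would then split into two cases according to the nonzero module $M$.

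\textbf{Case 1: $t(M)\neq 0$.} Pick $0\neq x\in t(M)$ and embed $M$ into an injective hull $E(M)$. Since $x$ is torsion, $x$ lies in $t(E(M))$, so the composite $Dx\hookrightarrow M\to E(M)$ lands in the torsion injective module $E=t(E(M))$. By injectivity of $E$, the inclusion $Dx\to E$ extends to a map $M\to E$, and this map is nonzero because it is nonzero on $x$.

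\textbf{Case 2: $M$ torsion-free, $M\neq 0$.} Pick $0\neq x\in M$. The aim is a nonzero map $M\to E$ with $E$ torsion injective, and the plan is to pass through a cyclic torsion quotient. Since $M$ is torsion-free, $Dx\cong D$. Consider the surjection $Dx\to D/sD$ sending $x\mapsto \bar 1$, which is nonzero because $s$ is a non-unit. Compose it with an embedding $D/sD\hookrightarrow E(D/sD)$. The module $E(D/sD)$ is torsion: it is an essential extension of the torsion module $D/sD$, and for a domain the torsion part of any module is essentially closed in it (if $y$ is non-torsion then $Dy\cong D$ is torsion-free, so $Dy\cap D/sD=0$). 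Hence $E=E(D/sD)$ is torsion injective. By injectivity, the map $Dx\to E$ extends to $M\to E$, and the extension is nonzero since it is nonzero at $x$.

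This argument in fact handles every nonzero $M$ uniformly. For $x\neq0$, either $\operatorname{Ann}(x)\neq0$, in which case one uses Case 1, or $Dx\cong D$, in which case one uses Case 2. The only real point requiring care, and the place where the hypothesis that $D$ is not a field is used, is producing a nonzero torsion quotient of $Dx\cong D$. This is exactly where $s$ must be a nonzero non-unit. Apart from that, the main thing to verify is that the essential-extension/torsion-closure step makes injective hulls of torsion modules torsion. That check is routine, as sketched above.
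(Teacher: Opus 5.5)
Your proof is correct and follows the paper's argument. Both split on whether the chosen $x$ has nonzero annihilator, and for $\Ann_D(x)=0$ both use $Dx\cong D\to D/sD\hookrightarrow E(D/sD)$ with $s$ a nonzero nonunit, showing $E(D/sD)$ is torsion by essentiality. The only deviation is in the torsion case: you use $t(E(M))$, justified by your Baer-criterion check, whereas the paper takes the injective hull $E(Dx)\cong E_D(D/\Ann_D(x))$ directly. Your own essentiality argument from Case 2 already shows that hull is torsion, so the extra lemma is not needed.
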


\begin{proof}
Choose $0\neq m\in M$ and write $I=\Ann_D(m)$.  If $I\neq0$, then
$Dm\cong D/I$ is a nonzero torsion module.  Let $E=E_D(D/I)$ be its injective
hull.  The module $E$ is torsion: indeed, if $x\in E$ were non-torsion, then
$Dx\cap D/I\neq0$ by essentiality, so for some $0\neq r\in D$ the element
$rx$ would be a nonzero torsion element; then $0\neq s\in D$ with
$srx=0$ would imply $x$ torsion, a contradiction.  The inclusion
$Dm\hookrightarrow E$ extends to a homomorphism $M\to E$, which is nonzero.

If $I=0$, choose a nonzero nonunit $a\in D$.  The map
$Dm\cong D\to D/aD$ sending $m$ to $1+aD$ is nonzero.  Composing with the
inclusion into the injective hull $E_D(D/aD)$ and extending to $M$ again gives
a nonzero map from $M$ to a torsion injective module.
\end{proof}

\begin{theorem}\label{thm:Dedekinddual}
Let $D$ be a Dedekind domain which is not a field and put
\[
 R=D[\varepsilon]/(\varepsilon^2).
\]
Then $R$ is Noetherian and non-reduced, and every Baer $R$-module is
projective.  If $D$ is a principal ideal domain, then every Baer $R$-module is
free.
\end{theorem}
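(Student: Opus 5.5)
The plan follows the outline in the introduction.

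\textbf{Easy properties.} $R=D\oplus D\varepsilon$ is a finitely generated $D$-module, so $R$ is Noetherian. Since $\varepsilon^2=0\neq\varepsilon$, $R$ is not reduced.

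\textbf{Step 1: regular-torsion over $R$ is $D$-torsion.} An element $a+b\varepsilon$ is regular iff $a\neq0$. Indeed, if $a=0$ then $\varepsilon$ kills it. If $a\neq0$ and $(a+b\varepsilon)(c+d\varepsilon)=0$, then $ac=0$ and $ad+bc=0$, so $c=d=0$. Now every $a\neq 0$ in $D$ is regular in $R$. Conversely, if $(a+b\varepsilon)t=0$ with $a\ne0$, then multiplying by $a-b\varepsilon$ gives $a^2t=0$. Hence an $R$-module is regular-torsion iff its underlying $D$-module is torsion.

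\textbf{Step 2: a Baer $R$-module $B$ is $D$-projective.} Since $R$ is $D$-free of rank $2$, $\Hom_D(R,-)$ is exact, and adjunction gives
\[
\Ext^1_D(B,T)\cong\Ext^1_R(B,\Hom_D(R,T)).
\]
The coinduced module $\Hom_D(R,T)\cong T^2$ is $D$-torsion when $T$ is, hence regular-torsion over $R$ by Step 1. So $B$ is a Baer $D$-module, and by the Angeleri H\"ugel--Bazzoni--Herbera theorem \cite{ABH} for domains, $B$ is $D$-projective.

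\textbf{Step 3: exactness at $\varepsilon$, the main obstacle.} Let $K=\ker(\varepsilon\colon B\to B)\supseteq \varepsilon B$ and $H=K/\varepsilon B$; we want $H=0$. Take a free $R$-presentation $0\to N\to F\to B\to 0$. Both $F$ and $B$ are $D$-projective, so the sequence is $D$-split and $N$ is $D$-projective. A free $R$-module has trivial $\varepsilon$-homology, so the long exact sequence in $\varepsilon$-homology (a two-periodic complex) gives $H(B)\cong H(N)$, up to the parity shift.

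To compute $\Ext^1_R(B,T)$ for a torsion $D$-module $T$ regarded as an $R$-module with $\varepsilon T=0$, use
\[
\Hom_R(B,T)=\Hom_D(B/\varepsilon B,T).
\]
The obstruction to extending maps from $N$ to $F$ should be $\Hom_D(H,T)$, via a dimension-shift identification of the form
\[
\Ext^1_R(B,T)\supseteq \Hom_D(H(B),T)
\]
coming from the Hom-$\varepsilon$ spectral/long exact comparison. Concretely, I would construct from a nonzero $D$-map $f\colon H\to E$ into a torsion injective $E$ (which exists by \Cref{lem:torsion-injective-detects} if $H\ne0$) a non-split extension
\[
0\to E\to X\to B\to 0.
\]
Take $X=B\oplus E$ as a $D$-module, with $\varepsilon(b,e)=(\varepsilon b,\tilde f(b))$, where $\tilde f\colon K\to E$ is the composite $K\to H\to E$ extended to $B$ using injectivity of $E$. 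Then $\varepsilon^2=0$ on $X$ requires $\tilde f(\varepsilon b)=0$, which holds because $\varepsilon B\subseteq K$ maps to $0$ in $H$. A splitting would be a map $b\mapsto(b,g(b))$ commuting with $\varepsilon$, i.e.\ $g(\varepsilon b)=\tilde f(b)$ with $\varepsilon E=0$. Restricted to $K$, this says $\tilde f|_K=0$ on $K$, so $f=0$, a contradiction. Since $B$ is Baer this extension must split, so $H=0$.

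\textbf{Step 4: lift to $R$-projectivity.} $\varepsilon B$ is a $D$-submodule of the projective $D$-module $B$. Since $D$ is hereditary, $\varepsilon B$ is $D$-projective, and $B/K\cong\varepsilon B$ is projective, so $K$ is a $D$-summand. As $K=\varepsilon B$, we get $B/\varepsilon B\cong\varepsilon B$. Choose a $D$-complement $P$ with $B=P\oplus \varepsilon B$. The map $P\otimes_D R\to B$ is then surjective, because $\varepsilon P=\varepsilon B$. It is injective, because $\varepsilon\colon P\to\varepsilon B$ is an isomorphism: its kernel is $P\cap K=P\cap\varepsilon B=0$. Hence $B\cong P\otimes_D R$ with $P$ a projective $D$-module, so $B$ is $R$-projective. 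If $D$ is a PID, projective $D$-modules are free, so $B$ is $R$-free.
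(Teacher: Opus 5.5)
Your proposal is correct and follows essentially the same route as the paper. You restrict to $D$ via the coinduced module $\Hom_D(R,T)$ and apply the Angeleri H\"ugel--Bazzoni--Herbera theorem. You then use the explicit square-zero extension with $\varepsilon(b,e)=(\varepsilon b,\tilde f(b))$ to force $\ker\varepsilon=\varepsilon B$, and finally use heredity of $D$ to get $B\cong P\otimes_D R$; your complement $P$ plays the role of the paper's $\sigma(A)$. The speculative first half of your Step 3 (free presentations, $\varepsilon$-homology shifts, a claimed inclusion $\Hom_D(H(B),T)\subseteq\Ext^1_R(B,T)$) is not justified as written, but it is also unnecessary, since the concrete extension argument alone proves $H=0$, so you should simply delete it.
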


\begin{proof}
The ring $R$ is module-finite over the Noetherian ring $D$, hence Noetherian,
and $0\neq\varepsilon$ with $\varepsilon^2=0$, so it is not reduced.
Exactly as in \Cref{prop:Zdual}, an element $a+b\varepsilon$ is regular if and
only if $a\neq0$.  Therefore an $R$-module is regular-torsion if and only if
its underlying $D$-module is torsion.

Let $B$ be Baer over $R$.  For every torsion $D$-module $U$, the coinduced
module $\Hom_D(R,U)$ is torsion as a $D$-module because
$R\cong D^2$ over $D$; hence it is regular-torsion over $R$.  Derived
adjunction yields
\[
 \Ext_D^1(B,U)
 \cong
 \Ext_R^1\!\left(B,\Hom_D(R,U)\right)=0.
\]
Thus $B$, viewed as a $D$-module, is Baer.  By the solution of the Baer
splitting problem for domains \cite{ABH}, $B_D$ is projective.

Let again $d:B\to B$ be multiplication by $\varepsilon$.  Suppose that
$H=\ker d/\operatorname{Im}d$ is nonzero.  By
\Cref{lem:torsion-injective-detects}, there exist a torsion injective
$D$-module $E$ and a nonzero map $f:H\to E$.  As before, the induced map on
$\ker d$ extends to a $D$-homomorphism $\alpha:B\to E$ with
$\alpha d=0$.  Turn $E$ into an $R$-module by $\varepsilon E=0$ and define
an $R$-module structure on $E\oplus B$ by
\[
 \varepsilon(e,b)=(\alpha(b),d(b)).
\]
Then
\[
 0\longrightarrow E\longrightarrow E\oplus B\longrightarrow B
 \longrightarrow0
\]
is an exact sequence of $R$-modules.  The module $E$ is regular-torsion, so
the sequence splits because $B$ is Baer.  Writing a splitting as
$s(b)=(h(b),b)$ gives $h(db)=\alpha(b)$, and hence $\alpha$ vanishes on
$\ker d$, contrary to the choice of $f$.  Therefore
\[
 \ker d=\operatorname{Im}d=:A.
\]

Since $D$ is Dedekind, it is hereditary; hence the $D$-submodule $A$ of the
projective $D$-module $B$ is projective.  Consequently the exact sequence
\[
 0\longrightarrow A\longrightarrow B\xrightarrow{d}A\longrightarrow0
\]
splits over $D$.  Choose a $D$-linear section $\sigma:A\to B$ of $d$.
Exactly as before,
\[
 R\otimes_DA\longrightarrow B,
 \qquad
 1\otimes a\longmapsto\sigma(a),\qquad
 \varepsilon\otimes a\longmapsto a,
\]
is an $R$-linear isomorphism.  Since $A$ is projective over $D$,
$R\otimes_DA$ is projective over $R$ (tensor a splitting of $A$ from a free
$D$-module with the free $D$-algebra $R$).  Hence $B$ is projective over $R$.
If $D$ is a PID, then $A$ is free, so $B\cong R\otimes_DA$ is free over $R$.
\end{proof}

\begin{remark}\label{rem:totalquotientdual}
Let $K$ be the quotient field of $D$.  For the ring in
\Cref{thm:Dedekinddual}, localization at the regular elements gives
\[
 Q(R)\cong K[\varepsilon]/(\varepsilon^2).
\]
In particular, $Q(R)$ is Artinian but not semisimple.  Moreover
$\sqrt{0}=(\varepsilon)$ is the unique minimal prime of $R$.  Thus these
examples fail $\tau_q$-semisimplicity solely because of nilpotence, not because
of an infinite minimal spectrum.
\end{remark}

\begin{corollary}\label{prop:Zdual}
	Let $R=\mathbb Z[\varepsilon]/(\varepsilon^2)$.  Then the Noetherian ring $R$
	is not reduced, but every Baer $R$-module is a free $R$-module.  Hence $R$ is
	a counterexample to \eqref{eq:conjecture-prelim}.
\end{corollary}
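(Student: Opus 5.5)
The plan is to deduce the corollary from \Cref{thm:Dedekinddual} by taking $D=\mathbb Z$. First, $\mathbb Z$ is a principal ideal domain that is not a field, and in particular it is Dedekind. So the hypotheses of the theorem hold with $R=\mathbb Z[\varepsilon]/(\varepsilon^2)$. The theorem then gives three facts at once: $R$ is Noetherian, $R$ is non-reduced (since $0\neq\varepsilon$ and $\varepsilon^2=0$), and, by the PID clause, every Baer $R$-module is free, hence projective.

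One point should be made self-contained, because the proof of \Cref{thm:Dedekinddual} refers to this corollary for the description of regular elements. I would check directly that $a+b\varepsilon$ is regular if and only if $a\neq0$.
\begin{itemize}
\item If $a\neq0$ and $(a+b\varepsilon)(c+e\varepsilon)=0$, then $ac=0$, so $c=0$. Then $ae\varepsilon=0$, so $e=0$, since $\mathbb Z$ is a domain and $R\cong\mathbb Z^2$ as a $\mathbb Z$-module.
\item If $a=0$, then $b\varepsilon\cdot\varepsilon=0$, so $b\varepsilon$ is a zero divisor.
\end{itemize}
The same computation works verbatim for any domain $D$, so there is no circularity: it is exactly the input the theorem needs.

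Finally, $R$ is not $\tau_q$-semisimple by \eqref{eq:tauq-char-prelim}, because it is not reduced. Equivalently, by \Cref{rem:totalquotientdual}, $Q(R)\cong\mathbb Q[\varepsilon]/(\varepsilon^2)$ is not semisimple. Hence the hypothesis of \eqref{eq:conjecture-prelim} holds for $R$ while its conclusion fails, and $R$ is a counterexample.

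The only genuine subtlety is the regular-element characterization just verified. Everything else is direct specialization of \Cref{thm:Dedekinddual}, so I expect no real obstacle.
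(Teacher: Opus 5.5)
Your proposal is correct and matches the paper: it gives no separate proof, and the corollary follows by specializing \Cref{thm:Dedekinddual} to the PID $D=\mathbb Z$. Your direct check that $a+b\varepsilon$ is regular if and only if $a\neq0$ is a worthwhile addition. The theorem's proof cites this corollary for that fact, a leftover backward reference, and your computation removes the apparent circularity.
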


The property is compatible with finite decompositions, which enlarges the
non-reduced family considerably.

\begin{proposition}\label{prop:finiteproductBaer}
Let $R_1,\ldots,R_n$ be commutative rings such that every Baer $R_i$-module is
projective, and put $R=\prod_{i=1}^nR_i$.  Then every Baer $R$-module is
projective.
\end{proposition}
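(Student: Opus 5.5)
We reduce to the factors using the orthogonal idempotents $e_1,\dots,e_n$ of $R=\prod_i R_i$, where $e_i$ is the identity of $R_i$ and $0$ elsewhere. Every $R$-module $M$ decomposes canonically as $M=\bigoplus_i e_iM$. Here $e_iM$ is an $R_i$-module, and we regard it as an $R$-module via the projection $R\to R_i$. Homomorphisms and extensions split the same way:
\[
\Ext^1_R(M,N)\cong\bigoplus_{i}\Ext^1_{R_i}(e_iM,e_iN),
\]
because the category of $R$-modules is equivalent to the product of the categories of $R_i$-modules. The plan is therefore to take a Baer $R$-module $B$, show that each $e_iB$ is a Baer $R_i$-module, apply the hypothesis to get each $e_iB$ projective over $R_i$, and conclude that $B$ is projective over $R$.

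\textbf{Regular elements and torsion.} An element $r=(r_1,\dots,r_n)$ is regular in $R$ if and only if each $r_i$ is regular in $R_i$. Indeed, a zero divisor $r_i$ with $r_ix_i=0$ and $x_i\neq0$ yields the nonzero element $x_i e_i$ annihilated by $r$; the converse holds componentwise. Now let $T$ be a regular-torsion $R_i$-module, viewed as an $R$-module via the projection, so that $e_jT=0$ for $j\neq i$. Given $t\in T$, pick $s_i\in\Reg(R_i)$ with $s_it=0$. Then $s=(1,\dots,s_i,\dots,1)$ is regular in $R$ and $st=s_it=0$. Hence $T$ is regular-torsion over $R$.

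\textbf{Transferring the Baer property.} Since $B$ is Baer over $R$, we get
\[
0=\Ext^1_R(B,T)\cong\Ext^1_{R_i}(e_iB,T).
\]
The isomorphism holds because $e_jT=0$ for $j\ne i$ kills the other summands. Alternatively, one can argue directly: $R_i=Re_i$ is a projective $R$-module, so restriction along $R\to R_i$ is exact and preserves projectives, and $\Ext^1_R(e_iB,T)\cong\Ext^1_{R_i}(e_iB,T)$. Since $e_iB$ is a direct summand of $B$, the group $\Ext^1_R(e_iB,T)$ vanishes as well. Thus $e_iB$ is a Baer $R_i$-module, and by hypothesis it is $R_i$-projective.

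\textbf{Conclusion.} A projective $R_i$-module is a summand of some $R_i^{(X)}=(Re_i)^{(X)}$, which is a summand of $R^{(X)}$. So each $e_iB$ is $R$-projective, and $B=\bigoplus_i e_iB$ is $R$-projective.

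The only step needing care is the compatibility of $\Ext$ with the idempotent decomposition. This is routine once we note that $e_i$ acts centrally, so any extension of $R$-modules decomposes into its $e_i$-components. No deeper obstacle is expected.
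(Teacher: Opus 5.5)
Your proposal is correct and follows essentially the same route as the paper: decompose via the central idempotents $e_i$, note that regularity and regular-torsion are checked componentwise, use $\Ext^1_R(B,T)\cong\prod_i\Ext^1_{R_i}(e_iB,e_iT)$ with $T$ concentrated in one component to see that each $e_iB$ is Baer over $R_i$, and reassemble the projective components. Your alternative justification of the $\Ext$ comparison, restricting an $R_i$-projective resolution along $R\to R_i$ using that $Re_i$ is $R$-projective, is also valid.
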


\begin{proof}
Let $e_i\in R$ be the standard central idempotents.  Every $R$-module has the
canonical decomposition $B\cong\prod_i B_i$, where $B_i=e_iB$ is an
$R_i$-module; for a finite product the product and direct sum coincide.
An element $(s_1,\ldots,s_n)\in R$ is regular if and only if every $s_i$ is
regular in $R_i$.  It follows that an $R$-module $T\cong\prod_iT_i$ is
regular-torsion if and only if every $T_i$ is regular-torsion over $R_i$.
Also
\[
 \Ext_R^1(B,T)\cong\prod_{i=1}^n\Ext_{R_i}^1(B_i,T_i).
\]
If $B$ is Baer over $R$, fix $i$ and take a regular-torsion $R_i$-module
$T_i$, with all other components equal to zero.  Then the displayed
isomorphism shows that $B_i$ is Baer over $R_i$.  Hence every $B_i$ is
projective, and therefore $B\cong\prod_iB_i$ is projective over $R$.
\end{proof}

\begin{corollary}\label{cor:manyminimalnonreduced}
For every integer $n\ge1$ there exists a commutative non-reduced Noetherian
ring with exactly $n$ minimal prime ideals such that every Baer module is
projective.  One may take
\[
 R=\prod_{i=1}^n D_i[\varepsilon_i]/(\varepsilon_i^2),
\]
where each $D_i$ is a Dedekind domain which is not a field.  If all $D_i$ are
PIDs, then every Baer $R$-module is a finite product of free modules over the
factors.
\end{corollary}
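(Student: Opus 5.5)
The plan is to combine \Cref{thm:Dedekinddual} with \Cref{prop:finiteproductBaer}, and then compute the minimal primes of the product. Fix $n\ge1$ and Dedekind domains $D_1,\ldots,D_n$, none of which is a field; for instance, take every $D_i=\mathbb Z$. Put $R_i=D_i[\varepsilon_i]/(\varepsilon_i^2)$. By \Cref{thm:Dedekinddual}, each $R_i$ is Noetherian and non-reduced, and every Baer $R_i$-module is projective. Then \Cref{prop:finiteproductBaer} shows that every Baer module over $R=\prod_{i=1}^nR_i$ is projective.

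The ring-theoretic properties of $R$ need a short check. The ring $R$ is Noetherian, since a finite product of Noetherian rings is Noetherian: every ideal of $R$ has the form $\prod_i I_i$. It is non-reduced, because the element having $\varepsilon_1$ in the first coordinate and $0$ elsewhere is a nonzero nilpotent. For the minimal primes, recall that the primes of a finite product $\prod_iR_i$ are exactly the ideals $R_1\times\cdots\times P_i\times\cdots\times R_n$ with $P_i\in\Spec R_i$. Hence $\Min(R)$ is in bijection with $\bigsqcup_i\Min(R_i)$. By \Cref{rem:totalquotientdual}, each $R_i$ has the unique minimal prime $(\varepsilon_i)$; one can also see this directly, since $R_i/(\varepsilon_i)\cong D_i$ is a domain and $(\varepsilon_i)$ is nilpotent. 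Therefore $R$ has exactly $n$ minimal primes.

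For the PID refinement, let $B$ be a Baer $R$-module and decompose $B\cong\prod_ie_iB$. The proof of \Cref{prop:finiteproductBaer} shows that each $e_iB$ is Baer over $R_i$. By the PID clause of \Cref{thm:Dedekinddual}, each $e_iB$ is then free over $R_i$, so $B$ is a finite product of free modules over the factors.

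No step should be a real obstacle. The only point needing care is the bookkeeping of prime ideals in a finite product, which is standard via the idempotents $e_i$. One should also note that the statement claims only that $B$ is a product of free modules over the factors, not that $B$ is free over $R$. Indeed, freeness over $R$ can fail when the ranks of the factors differ.
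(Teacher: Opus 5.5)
Your proposal is correct and follows the paper's own proof. Like the paper, you combine \Cref{thm:Dedekinddual} with \Cref{prop:finiteproductBaer}, and you count the minimal primes of the finite product factorwise, with each factor contributing only $(\varepsilon_i)$. Your additional checks (Noetherianity and non-reducedness of the product, and the PID clause via the Baer components $e_iB$) fill in details the paper leaves implicit.
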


\begin{proof}
Apply \Cref{thm:Dedekinddual,prop:finiteproductBaer}.  Each factor has the
unique minimal prime $(\varepsilon_i)$, while the minimal primes of a finite
product are obtained by choosing one factor and taking its minimal prime there
and the whole ring in all other components.  Hence there are exactly $n$
minimal primes.
\end{proof}

This family shows that the reducedness hypothesis in
\eqref{eq:tauq-char-prelim} is genuinely independent of the Baer splitting
property.  The substantially stronger question is whether the conjecture
might become true after reducedness is imposed.  The remaining sections show
that the answer is still negative.

\section{A countable-ring criterion for Baer splitting}\label{sec:criterion}

In the following sections, we are devoted to give a reduced ring $R$	such that every Baer $R$-module is projective, but
$R$ is not $\tau_q$-semisimple; equivalently in our reduced examples, it has infinitely many minimal primes.

We shall repeatedly use the following standard character-duality identity.  If
$A$ and $M$ are $R$-modules and $(-)^+=\Hom_{\mathbb Z}(-,\mathbb Q/\mathbb
Z)$, then
\begin{equation}\label{eq:char-dual}
  \Tor_1^R(A,M)^+\cong \Ext_R^1(M,A^+).
\end{equation}
In particular, if $J$ is an ideal containing a regular element $s$, then
$(R/J)^+$ is regular-torsion, since it is annihilated by $s$.

We also use three results on countable direct limits from
Angeleri H\"ugel--Bazzoni--Herbera \cite[Section~2]{ABH}.  We record them in a
form adapted to our application.

\begin{lemma}[Mittag--Leffler package]\label{lem:MLpackage}
Let
\[
 F_1\xrightarrow{f_1}F_2\xrightarrow{f_2}\cdots
\]
be a countable direct system of finitely generated free $R$-modules and put
$B=\varinjlim F_n$.
\begin{enumerate}[label=\textup{(\arabic*)}]
\item For every $R$-module $M$,
\[
 \Ext_R^1(B,M^{(\mathbb N)})=0
\]
if and only if the inverse tower
\[
 \bigl(\Hom_R(F_n,M),\Hom_R(f_n,M)\bigr)_{n\in\mathbb N}
\]
satisfies the Mittag--Leffler condition.
\item For a family $(M_i)_{i\in I}$, the tower obtained with
$\bigoplus_iM_i$ is Mittag--Leffler if and only if the tower obtained with
$\prod_iM_i$ is Mittag--Leffler.
\item If $N\subseteq M$ is pure and the tower obtained with $M$ is
Mittag--Leffler, then the tower obtained with $N$ is Mittag--Leffler.
\item $B$ is projective if and only if the tower obtained with $R$ is
Mittag--Leffler.
\end{enumerate}
\end{lemma}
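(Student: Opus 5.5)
These four facts are the standard Mittag--Leffler toolkit for countable direct limits, and the plan is to derive each from one exact sequence together with the behaviour of $\varprojlim^1$ on towers. Since $B=\varinjlim F_n$ is a countable direct limit, there is the standard presentation
\[
 0\longrightarrow\bigoplus_n F_n\xrightarrow{\ \phi\ }\bigoplus_n F_n\longrightarrow B\longrightarrow0,
 \qquad \phi(x_n)=x_n-f_n(x_n),
\]
and injectivity of $\phi$ is checked by looking at the largest index occurring in a finite sum. Because each $F_n$ is free, applying $\Hom_R(-,N)$ to this sequence yields the exact sequence
\[
 0\to\Hom_R(B,N)\to\prod_n\Hom_R(F_n,N)\xrightarrow{1-\text{shift}}\prod_n\Hom_R(F_n,N)\to\Ext_R^1(B,N)\to0 .
\]
Hence $\Ext_R^1(B,N)\cong\varprojlim^1\Hom_R(F_n,N)$ for every module $N$. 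Everything below reduces to statements about $\varprojlim^1$ of the tower $H_n(N):=\Hom_R(F_n,N)$.

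Part (1) is the step I expect to be the main obstacle. The reason is that Mittag--Leffler alone gives only $\varprojlim^1=0$; the converse fails in general and needs countability. For the forward direction, I use that $F_n$ is finitely generated free, so $H_n(M^{(\mathbb N)})\cong H_n(M)^{(\mathbb N)}$. I then invoke the Emmanouil / Bergman--ABH lemma: for a countable tower $(G_n)$ of abelian groups, $\varprojlim^1 G_n^{(\mathbb N)}=0$ forces $(G_n)$ to be Mittag--Leffler. To prove that lemma, I would suppose ML fails at some index $k$, so there is a strictly decreasing sequence of images, and choose elements witnessing the failure. Placing them in distinct coordinates of $G^{(\mathbb N)}$ produces a sequence in $\prod_n G_n^{(\mathbb N)}$ that is not in the image of $1-\text{shift}$, since any preimage would need infinitely many nonzero coordinates at some stage. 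For the converse, if $(H_n(M))$ is ML then so is $H_n(M)^{(\mathbb N)}$, because images commute with direct sums. A countable ML tower has vanishing $\varprojlim^1$ by the classical argument, so $\Ext_R^1(B,M^{(\mathbb N)})=0$.

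Parts (2)--(4) follow quickly. For (2), since $F_n\cong R^{k_n}$, the tower $H_n(N)\cong N^{k_n}$ has maps given by matrices over $R$. The ML condition then says: for each $n$ there is $m\ge n$ such that every $N$-solution of a certain finite matrix image problem factors through stage $m$. This is equivalent to a finitely many pp-formula inclusion statement. Such statements hold in $\bigoplus_i M_i$ if and only if they hold in $\prod_i M_i$, because pp-formulas commute with products and with direct sums, both computed coordinatewise. This ties (2) to the known ABH phrasing: ML with respect to $M$ is equivalent to the inclusion of the pp-definable subgroups $\varphi_{nm}(M)\supseteq\varphi_{nk}(M)$ for all $k\ge m$. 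For (3), a pure submodule $N\subseteq M$ satisfies $\varphi(N)=\varphi(M)\cap N$ for every pp-formula $\varphi$, so these inclusions descend from $M$ to $N$. For (4), if $B$ is projective then $\Ext^1(B,R^{(\mathbb N)})=0$, and by (1) the tower for $R$ is ML. Conversely, if the tower for $R$ is ML, then by (2) so is the tower for every free module $R^{(I)}$, and hence, via (1), $\Ext_R^1(B,R^{(I)})=0$. Since $B$ is countably presented of projective dimension at most $1$ by the presentation above, taking $N$ to be the kernel of a free cover $R^{(I)}\to B$ with the presentation made free splits $B$. Alternatively one can simply cite \cite[Section~2]{ABH} for this last implication.
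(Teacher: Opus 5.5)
Your proposal is correct. The paper proves nothing here itself; it only quotes \cite[Proposition~2.5, Corollary~2.6, Proposition~2.9]{ABH}. What you give is a self-contained reconstruction of essentially the arguments behind those citations: the telescope presentation yields $\Ext_R^1(B,-)\cong\varprojlim^1\Hom_R(F_n,-)$, Emmanouil's criterion handles (1), and pp-definability of the images $\operatorname{im}\bigl(\Hom_R(F_k,M)\to\Hom_R(F_n,M)\bigr)$ makes (2) and (3) formal. For (2) even pp-formulas are more than needed. $\Hom_R(F_n,-)$ commutes with products and, by finite generation, with direct sums, and images are computed componentwise. So both conditions say that the towers for the $M_i$ are Mittag--Leffler with a stabilization index uniform in $i$. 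The citation keeps the paper short, while your version shows exactly where finite generation of the $F_n$ and countability of the system enter.

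Three small slips should be fixed.
\begin{enumerate}
\item Injectivity of $\phi$ is seen at the \emph{smallest} index $p$ of a finite sum $x=\sum x_n$, because the $F_p$-component of $\phi(x)$ is $x_p$. At the largest index $q$ the component $-f_q(x_q)$ can vanish when $f_q$ is not injective.
\item If $\varphi_{nk}$ defines the image of $\Hom_R(F_k,M)\to\Hom_R(F_n,M)$, the Mittag--Leffler condition is $\varphi_{nm}(M)\subseteq\varphi_{nk}(M)$ for $k\ge m$. The inclusion you wrote holds automatically.
\item The converse in (4) needs neither (2) nor an arbitrary index set $I$. The kernel $\bigoplus_nF_n$ of the telescope presentation is free of countable rank, hence a direct summand of $R^{(\mathbb N)}$. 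So (1) with $M=R$ already gives $\Ext_R^1(B,\bigoplus_nF_n)=0$, and the presentation splits.
\end{enumerate}
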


Parts (1) and (4) are \cite[Proposition~2.5]{ABH}, part (2) is
\cite[Corollary~2.6]{ABH}, and part (3) is \cite[Proposition~2.9]{ABH}.
We shall also use that a countably presented flat module admits such a direct
system; see \cite[Proposition~3.1]{ABH}.

The following consequence of the Gruson--Jensen cardinality theorem is
essential for the uncountable reduction.

\begin{theorem}[Gruson--Jensen]\label{thm:GJ}
If $R$ is an at most countable ring, then every flat $R$-module has projective
dimension at most one.
\end{theorem}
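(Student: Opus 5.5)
The plan is to obtain the statement from the Gruson--Jensen theorem on projective dimension of flat modules over rings of small cardinality, and to supply a direct argument for the countable case. Let $R$ be at most countable and $M$ a flat $R$-module. By Lazard's theorem, $M$ is a direct limit of finitely generated free modules.

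The first step is to reduce to countably generated flat modules. For this I would write $M$ as a continuous well-ordered union $M=\bigcup_{\alpha<\kappa}M_\alpha$ of pure submodules, with $M_0=0$ and each quotient $M_{\alpha+1}/M_\alpha$ countably generated. I would build the filtration by a standard closure argument. Since $R$ is countable, any countable subset of $M$ lies in a countably generated pure submodule, obtained by closing off countably many times under witnesses for purity. Each $M_{\alpha+1}/M_\alpha$ is then flat, because a quotient of a flat module by a pure submodule is flat. It is also countably generated, and hence countably presented: over a countable ring, a countably generated flat module is countably presented, because the kernel of $R^{(\mathbb N)}\to N$ is a pure submodule of a countable module and is therefore countable.

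The second step handles a countably presented flat module $N$. By the remark following \Cref{lem:MLpackage} (\cite[Proposition~3.1]{ABH}), $N=\varinjlim F_n$ for a countable direct system of finitely generated free modules $F_n$. The standard telescope sequence
\[
0\longrightarrow\bigoplus_n F_n\longrightarrow\bigoplus_n F_n\longrightarrow N\longrightarrow0
\]
is then exact, so $\pd N\le1$.

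Finally, I would apply the Eklof lemma for $\Ext^2$. If each successive quotient $M_{\alpha+1}/M_\alpha$ has projective dimension at most one, then $\Ext^2_R(M_{\alpha+1}/M_\alpha,-)=0$. A transfinite dévissage of this kind, which Auslander's lemma on projective dimension of filtered modules provides, gives $\pd M\le 1$.

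The main obstacle is the purification step: we need the filtration to consist of pure submodules, so that the successive quotients are flat. I would carry this out by the usual countable closure. At each stage, every finite system of linear equations with constants in the current submodule that is solvable in $M$ gets a solution added. Since $R$ is countable, a countable set has only countably many such systems, so each stage stays countable and the union of the $\omega$ stages is pure. Alternatively, the whole statement can simply be cited from \cite{GrusonJensen}: there $\pd M\le n+1$ is proved whenever $|R|\le\aleph_n$, and the case $n=0$ is exactly the present statement.
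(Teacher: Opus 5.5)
Your argument is correct, but it is not the paper's route. The paper gives no proof of \Cref{thm:GJ}. It cites Gruson--Jensen: the statement is the countable case of the cardinality bound on pure global dimension, combined with the fact that a flat module has equal pure-projective and projective dimension.

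You instead prove the countable case directly, in three steps:
\begin{enumerate}[label=\textup{(\arabic*)}]
\item Countable purification gives a continuous filtration of $M$ by pure submodules whose factors are countably generated, hence countably presented, flat modules.
\item Each factor $N$ is a direct limit of an $\mathbb N$-indexed system of finitely generated free modules $F_n$, so the telescope sequence $0\to\bigoplus_nF_n\to\bigoplus_nF_n\to N\to0$ gives $\pd N\le1$.
\item Auslander's lemma transfers the bound to $M$.
\end{enumerate}

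What your route buys is self-containment. It uses only the purification fact and \cite[Proposition~3.1]{ABH}, and the paper already invokes both in the proofs of \Cref{thm:reduction} and \Cref{thm:criterion}. The Baer-splitting criterion would then not depend on the theory of pure global dimension. What the citation buys is generality. Gruson--Jensen bound the pure-projective dimension of arbitrary modules, and for every cardinality $\aleph_n$. The telescope sequence, by contrast, works only for $\mathbb N$-indexed systems, and the $\aleph_n$ case needs a genuinely harder analysis of direct limits over larger directed sets.

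One point should be made explicit in your first step. The successor stage must apply purification in $M/M_\alpha$ and pull back, using that $M_\alpha$ pure in $M$ and $M_{\alpha+1}/M_\alpha$ pure in $M/M_\alpha$ imply $M_{\alpha+1}$ pure in $M$. Applying ``every countable subset lies in a countably generated pure submodule'' in $M$ itself does not make $M_{\alpha+1}/M_\alpha$ countably generated. The paper does exactly this in the proof of \Cref{thm:reduction}. Also, countability of the kernel of $R^{(\mathbb N)}\to N$ needs no purity, since every subset of a countable set is countable.
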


This is the countable case of the cardinality bound on the pure global
dimension of a ring; for a flat module, pure-projective dimension agrees with
projective dimension.  See \cite{GrusonJensen}.  We shall also use the
standard fact that over a countable ring every countably generated module is
countably presented: if $R^{(\mathbb N)}\twoheadrightarrow M$, then
$R^{(\mathbb N)}$ is a countable set, hence its kernel is countable and
therefore countably generated.

We isolate two ring-theoretic conditions.

\begin{definition}\label{def:RCPE}
Let $R$ be a commutative ring.
\begin{enumerate}[label=\textup{(\mathrm{RC})}]
\item[(RC)] For every finitely generated ideal $I$ of $R$, there exists an ideal
$K$ such that
\[
 I\cap K=0,\qquad J:=I\oplus K\text{ contains a regular element.}
\]
\item[(PE)] The diagonal homomorphism
\begin{equation}\label{eq:mu}
 \mu_R:R\longrightarrow \prod_{s\in\Reg(R)}R/sR,
 \qquad x\longmapsto (x+sR)_s,
\end{equation}
is a pure monomorphism.
\end{enumerate}
\end{definition}

Condition (RC) is designed precisely to recover flatness of Baer modules
without assuming $\tau_q$-semisimplicity.

\begin{proposition}\label{prop:flat}
If $R$ satisfies \textup{(RC)}, then every Baer $R$-module is flat.
\end{proposition}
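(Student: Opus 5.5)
We will show that a Baer $R$-module $B$ is flat by checking $\Tor_1^R(B,R/I)=0$ for every finitely generated ideal $I$ of $R$. This suffices because flatness is tested on finitely generated ideals.

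Fix such an $I$ and choose $K$ as in (RC), so that $J=I\oplus K$ is an internal direct sum containing a regular element $s$. The key point is that $R/I$ is controlled by $R/J$. The inclusion $J\subseteq R$ and the direct sum decomposition give $I$ as a direct summand of $J$ (as modules), with projection $\pi:J\to I$. We compare the short exact sequences
\[
0\to I\to R\to R/I\to 0,\qquad 0\to J\to R\to R/J\to 0 .
\]
Applying $\Tor^R(B,-)$ gives natural identifications
\[
\Tor_1^R(B,R/I)\cong\ker(B\otimes I\to B),\qquad \Tor_1^R(B,R/J)\cong\ker(B\otimes J\to B).
\]

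Now we show the second kernel vanishes. The module $(R/J)^+$ is annihilated by $s$, so it is regular-torsion. Since $B$ is Baer, the character duality \eqref{eq:char-dual} gives
\[
\Tor_1^R(R/J,B)^+\cong\Ext_R^1(B,(R/J)^+)=0,
\]
and hence $\Tor_1^R(B,R/J)=0$. This means $B\otimes J\to B$ is injective.

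Since $J=I\oplus K$, we have $B\otimes J=(B\otimes I)\oplus(B\otimes K)$. The multiplication map $B\otimes J\to B$ restricts on the summand $B\otimes I$ to the multiplication map $B\otimes I\to B$. An injective map stays injective on a direct summand, so $B\otimes I\to B$ is injective. Therefore $\Tor_1^R(B,R/I)=0$.

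The only delicate point is this compatibility: the map $B\otimes I\to B\otimes J$ induced by the inclusion of $I$ as a summand of $J$ must be a split injection, and the multiplication map must factor through it. Both hold because the direct sum decomposition is one of $R$-modules, and tensor products preserve finite direct sums.

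Since $I$ was an arbitrary finitely generated ideal, $B$ is flat.
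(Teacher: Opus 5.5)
Your proof is correct and follows the paper's argument essentially step for step. Character duality turns the Baer condition against the regular-torsion module $(R/J)^+$ into $\Tor_1^R(R/J,B)=0$, so $J\otimes_R B\to B$ is injective, and this injectivity passes to the direct summand $I\otimes_R B$, giving $\Tor_1^R(R/I,B)=0$ for every finitely generated ideal $I$.
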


\begin{proof}
Let $B$ be Baer and let $I$ be a finitely generated ideal.  Choose $K$ and
$J=I\oplus K$ as in (RC), and choose $s\in J\cap\Reg(R)$.  Since $s(R/J)=0$,
the character module $(R/J)^+$ belongs to $\Tors$.  Hence
\[
 \Ext_R^1(B,(R/J)^+)=0.
\]
By \eqref{eq:char-dual}, $\Tor_1^R(R/J,B)=0$.  Thus the multiplication map
\[
 J\otimes_RB\longrightarrow B
\]
is injective.  Since $J=I\oplus K$, the module $I\otimes_RB$ is a direct
summand of $J\otimes_RB$, and the restriction
$I\otimes_RB\to B$ is injective.  Equivalently,
$\Tor_1^R(R/I,B)=0$.  This holds for every finitely generated ideal $I$, so
$B$ is flat.
\end{proof}

For modules of projective dimension at most one, the Baer condition can be
tested against direct sums of cyclic regular-torsion modules.

\begin{lemma}\label{lem:test}
Put
\[
 G_R=\bigoplus_{s\in\Reg(R)}R/sR.
\]
Let $X$ be an $R$-module with $\pd_RX\le 1$.
\begin{enumerate}[label=\textup{(\arabic*)}]
\item $X$ is Baer if and only if
$\Ext_R^1(X,T)=0$ for every direct sum $T$ of modules of the form $R/sR$,
$s\in\Reg(R)$.
\item Suppose in addition that $R$ is countable, $\kappa$ is an infinite
cardinal, and $X$ is $\kappa$-generated.  If
\[
 \Ext_R^1(X,G_R^{(\kappa)})=0,
\]
then $X$ is Baer.
\end{enumerate}
\end{lemma}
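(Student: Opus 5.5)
For part (1), one direction is immediate, since direct sums of the cyclic modules $R/sR$ with $s\in\Reg(R)$ are regular-torsion. For the converse, I would use the hypothesis $\pd_RX\le1$, which says exactly that $\Ext_R^2(X,-)=0$, so $\Ext_R^1(X,-)$ is right exact. Let $T$ be an arbitrary regular-torsion module. For each $t\in T$ choose $s_t\in\Reg(R)$ with $s_tt=0$. The map
\[
 \bigoplus_{t\in T}R/s_tR\longrightarrow T,\qquad 1+s_tR\longmapsto t,
\]
is then a well-defined epimorphism from a direct sum of the prescribed cyclic modules. Applying $\Ext_R^1(X,-)$ to the short exact sequence $0\to L\to\bigoplus_t R/s_tR\to T\to0$ gives an exact segment
\[
 \Ext_R^1\bigl(X,\textstyle\bigoplus_t R/s_tR\bigr)\to\Ext_R^1(X,T)\to\Ext_R^2(X,L)=0 .
\]
The first term vanishes by hypothesis, so $\Ext_R^1(X,T)=0$. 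This proves (1).

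For part (2), by (1) it suffices to show $\Ext_R^1(X,T)=0$ for every direct sum $T=\bigoplus_{i\in I}R/s_iR$. The plan is to show that every such $T$ is a direct summand of some $\bigoplus_{j}G_R^{(\kappa)}$, and then to reduce that larger sum to $G_R^{(\kappa)}$ itself using that $X$ is $\kappa$-generated.

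\emph{Step 1 (summand).} Each $R/s_iR$ is a direct summand of $G_R$. Grouping the index set $I$ into blocks of size $\kappa$ shows that $T$ is a direct summand of $G_R^{(\kappa\cdot\lambda)}$ for some cardinal $\lambda$. So it suffices to show $\Ext_R^1(X,G_R^{(\Lambda)})=0$ for every cardinal $\Lambda$.

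\emph{Step 2 (reduction to $\kappa$).} This is the main obstacle: we must pass from index sets of size $\kappa$ to arbitrary $\Lambda$. I would fix a projective presentation
\[
 0\to P_1\to P_0\to X\to0
\]
with $P_0=R^{(\kappa)}$, which is possible since $X$ is $\kappa$-generated. Because $\pd_RX\le1$, the kernel $P_1$ is projective. By Kaplansky's theorem, $P_1$ is a direct sum of countably generated projectives, and each summand is countably presented because $R$ is countable. A class of $\Ext_R^1(X,M)$ is represented by a homomorphism $\varphi:P_1\to M$, and it vanishes precisely when $\varphi$ extends to $P_0$. Now take $M=G_R^{(\Lambda)}$ and a homomorphism $\varphi:P_1\to M$.

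\begin{itemize}
\item The image of the generators of $P_0$ in $X$ spans $X$, and I want to localize $\varphi$ to a small part of $M$. First I would bound the size of $P_1$: it is a submodule of $R^{(\kappa)}$, and $|R^{(\kappa)}|\le\kappa$ since $R$ is countable and $\kappa$ is infinite. Hence $P_1$ is $\le\kappa$-generated.
\item Each generator of $P_1$ maps into finitely many coordinates of $G_R^{(\Lambda)}$. Consequently $\varphi$ factors through a subsum $G_R^{(\Lambda_0)}$ with $|\Lambda_0|\le\kappa$, and this subsum is a direct summand of $M$.
\item Since $\Ext_R^1(X,G_R^{(\Lambda_0)})$ is a direct summand of $\Ext_R^1(X,G_R^{(\kappa)})=0$, the map $\varphi$ extends to $P_0$ already inside $G_R^{(\Lambda_0)}$, and hence inside $M$.
\end{itemize}

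Therefore $\Ext_R^1(X,G_R^{(\Lambda)})=0$ for every $\Lambda$. Combined with Step 1 and part (1), this shows $X$ is Baer.

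The delicate point is precisely the cardinality bound on the generators of $P_1$. This is where countability of $R$ enters; the paper's earlier remark that countably generated modules over countable rings are countably presented is the prototype of this argument.
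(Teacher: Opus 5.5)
Your proof is correct and follows essentially the same route as the paper. Part (1) uses the epimorphism $\bigoplus_{t}R/s_tR\to T$ together with $\Ext_R^2(X,-)=0$. Part (2) uses the bound $|P_1|\le|R^{(\kappa)}|\le\kappa$, so that any map from $P_1$ into a direct sum of cyclics $R/sR$ lands in a direct summand with at most $\kappa$ cyclic pieces, which is in turn a summand of $G_R^{(\kappa)}$.

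The detour through $G_R^{(\Lambda)}$ and the appeal to Kaplansky's theorem are harmless but unnecessary. The paper factors $h\colon H\to T$ directly through such a small summand of $T$.
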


\begin{proof}
For (1), let $T\in\Tors$.  For each $t\in T$, choose $s_t\in\Reg(R)$ with
$s_tt=0$.  The maps $R/s_tR\to T$, $1+s_tR\mapsto t$, combine to an
epimorphism
\[
 \bigoplus_{t\in T}R/s_tR\twoheadrightarrow T.
\]
If $K$ is its kernel, the long exact Ext sequence and $\Ext_R^2(X,K)=0$ show
that vanishing on the direct sum implies $\Ext_R^1(X,T)=0$.  The converse is
immediate.

For (2), take an exact sequence
\[
 0\longrightarrow H\longrightarrow F\longrightarrow X\longrightarrow0
\]
with $F$ free of rank $\kappa$.  Since $\pd_RX\le1$, $H$ is projective.  As
$R$ is countable and $\kappa$ is infinite, $|F|=\kappa$, hence $H$ is at most
$\kappa$-generated.  Let $T$ be an arbitrary direct sum of cyclic modules
$R/sR$ and let $h:H\to T$.  The image of a generating set of $H$ is contained
in a direct summand $T'\subseteq T$ having at most $\kappa$ cyclic summands.
Such a $T'$ is isomorphic to a direct summand of $G_R^{(\kappa)}$; hence
$\Ext_R^1(X,T')=0$.  Therefore $h$ extends to $F$.  Thus
$\Ext_R^1(X,T)=0$, and (1) applies.
\end{proof}

We shall use the following standard observation.

\begin{lemma}\label{lem:tight-baer}
Let $B$ be Baer and let $A\subseteq B$.  If $\pd_R(B/A)\le1$, then $A$ is
Baer.
\end{lemma}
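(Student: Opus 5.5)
The plan is to apply the long exact $\Ext$ sequence to the short exact sequence
\[
 0\longrightarrow A\longrightarrow B\longrightarrow B/A\longrightarrow0
\]
and evaluate it against an arbitrary regular-torsion module. Fix $T\in\Tors$. The relevant portion of the sequence is
\[
 \Ext_R^1(B,T)\longrightarrow\Ext_R^1(A,T)\longrightarrow\Ext_R^2(B/A,T).
\]
The left term vanishes because $B$ is Baer. The right term vanishes because $\pd_R(B/A)\le1$. Exactness in the middle then gives $\Ext_R^1(A,T)=0$. Since $T\in\Tors$ was arbitrary, $A$ belongs to ${}^\perp\Tors=\B$, that is, $A$ is Baer.

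No step here should be a real obstacle. The only point to make explicit is that the vanishing of $\Ext_R^2(B/A,-)$ on every module is exactly what $\pd_R(B/A)\le1$ means. No hypothesis on $T$ beyond membership in $\Tors$ is used, and no hypothesis on $R$ is needed.

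This is the form in which the lemma will be applied later. In the singular-compactness and tight-system argument, $B/A$ will typically be flat over a countable ring, so \Cref{thm:GJ} supplies the bound $\pd_R(B/A)\le1$. Alternatively, $B/A$ may be countably presented of projective dimension at most one.
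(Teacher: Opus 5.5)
Your proof is correct and is exactly the paper's argument: the outer terms of the segment $\Ext_R^1(B,T)\to\Ext_R^1(A,T)\to\Ext_R^2(B/A,T)$ vanish, the left one because $B$ is Baer and the right one because $\pd_R(B/A)\le1$, so the middle term vanishes. Your closing remark also matches how the paper uses the lemma: in the singular-compactness step of \Cref{thm:reduction}, $A$ is pure in $B$, so $B/A$ is flat, and \Cref{thm:GJ} gives $\pd_R(B/A)\le1$.
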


\begin{proof}
For $T\in\Tors$, the exact sequence $0\to A\to B\to B/A\to0$ gives
\[
 0=\Ext_R^1(B,T)\longrightarrow \Ext_R^1(A,T)
 \longrightarrow \Ext_R^2(B/A,T)=0.
\]
\end{proof}

We now adapt the reduction theorem of Eklof--Fuchs--Shelah.  The point is that
their stationary-set lemma and their singular compactness theorem are stated
for an arbitrary fixed ring; the domain hypothesis in their application is
used to obtain a suitable tight system and countably presented small Baer
modules.  Under our hypotheses these inputs come from countability and
flatness.

\begin{theorem}[Countable reduction]\label{thm:reduction}
Let $R$ be a countable commutative ring and assume that every Baer $R$-module
is flat.  Then an $R$-module $B$ is Baer if and only if it admits a continuous
well-ordered filtration
\[
 0=B_0\subseteq B_1\subseteq\cdots\subseteq B_\alpha
 \subseteq\cdots\subseteq B_\lambda=B
\]
such that every factor $B_{\alpha+1}/B_\alpha$ is a countably generated Baer
module.
\end{theorem}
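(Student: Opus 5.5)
The "if" direction is the standard Eklof lemma. Given such a filtration, each factor $B_{\alpha+1}/B_\alpha$ lies in ${}^\perp\Tors$, and the class ${}^\perp\Tors$ is closed under transfinite extensions. Hence $B$ is Baer. No hypothesis on $R$ is used here.

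For the "only if" direction, fix a Baer module $B$. By hypothesis $B$ is flat, so $\pd_RB\le1$ by \Cref{thm:GJ}. The plan is to build a \emph{tight system} for $B$ and then run the Eklof--Fuchs--Shelah argument.

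\textbf{The tight system.} We want a family $\mathcal C$ of submodules $A\subseteq B$ with the following properties:
\begin{enumerate}[label=\textup{(\roman*)}]
\item $0,B\in\mathcal C$;
\item $\mathcal C$ is closed under unions of chains;
\item every countable subset of $B$ is contained in a member $A\in\mathcal C$ such that $A'/A$ is countably generated for a suitable successor;
\item for $A\subseteq A'$ in $\mathcal C$, the quotient $A'/A$ is flat with $\pd\le1$.
\end{enumerate}
Take $\mathcal C$ to be the pure submodules of $B$. Purity is preserved under unions of chains. A quotient of a flat module by a pure submodule is flat, and for $A\subseteq A'$ both pure in $B$, $A'/A$ is pure in $B/A$, hence flat. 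By \Cref{thm:GJ}, all such quotients have $\pd\le1$. Moreover, $B$ is countable-pure-closable: since $R$ is countable, any countable subset of $B$ lies in a countable pure submodule, by the usual closing-off argument (L\"owenheim--Skolem, adding witnesses for the finitely many linear-system solvability conditions at each step). Then \Cref{lem:tight-baer} shows that each pure $A\subseteq B$ is Baer, because $\pd(B/A)\le1$. The same lemma, applied inside $B/A$, shows that each quotient $A'/A$ between pure submodules satisfies
\[
\Ext^1_R(A'/A,T)=0\quad\text{whenever}\quad \Ext^1_R(B/A,T)=0 .
\]

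\textbf{Induction on the number of generators.} Let $\kappa$ be the minimal number of generators of $B$. If $\kappa\le\aleph_0$, take the trivial filtration. If $\kappa$ is regular uncountable, write $B$ as a continuous union of pure submodules $B_\alpha$ of size $<\kappa$, constructed by closing off. The key step is to show that $E=\{\alpha: B/B_\alpha\text{ is not Baer}\}$ is non-stationary. This is where the Eklof--Fuchs--Shelah stationary-set lemma enters. Its Ext-computations are ring-independent once all quotients have $\pd\le1$, and \Cref{lem:test}(2) replaces arbitrary regular-torsion $T$ by the single test module $G_R^{(\kappa)}$, which is exactly the set-sized test object their argument needs. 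Discarding a club, we get $B_{\alpha+1}/B_\alpha$ Baer, flat, and of size $<\kappa$. By induction, each factor has a filtration with countably generated Baer factors, and these refine into the desired filtration of $B$. If $\kappa$ is singular, use Shelah's singular compactness theorem (in the Eklof--Fuchs--Shelah form for an arbitrary ring) applied to the tight system $\mathcal C$. The inductive hypothesis for smaller cardinals supplies its premise, namely that many $<\kappa$-generated pure submodules are filtered.

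\textbf{Main obstacle.} The hardest part is verifying the stationary-set step: if $E$ were stationary, one must contradict $\Ext^1_R(B,G_R^{(\kappa)})=0$. I would follow Eklof--Fuchs--Shelah verbatim. For each $\alpha\in E$, pick a non-extendable homomorphism $B_{\alpha+1}\supseteq B_\alpha\to G_R^{(\kappa)}$, then glue these into a homomorphism $H\to G_R^{(\kappa)}$ from a free presentation of $B$ that does not extend. Gluing requires a presentation $0\to H\to F\to B\to 0$ with $H$ projective, which is guaranteed by $\pd B\le1$, and it requires $H$ to be filtered compatibly with the $B_\alpha$. To get this, choose the free presentation first and build the $B_\alpha$ together with the corresponding submodules of $F$ and $H$ as summands in one closing-off construction.
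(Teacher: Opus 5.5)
Your route is the paper's: Eklof's lemma for sufficiency, and Gruson--Jensen to get $\pd\le1$ for $B$ and for all quotients between pure submodules. After that come pure submodules as the tight system and induction on $\kappa$: the Eklof--Fuchs--Shelah stationary-set lemma plus \Cref{lem:test}(2) in the regular case, and singular compactness fed by \Cref{lem:tight-baer} and the induction hypothesis in the singular case. Testing with the single module $G_R^{(\kappa)}$ instead of the paper's size-matched $G_R^{(\mu)}$ is harmless, since \Cref{lem:test}(2) only needs an upper bound on the number of generators.

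The step that fails as written is the regular-case key step.

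\textbf{The set $E$.} The Eklof--Fuchs--Shelah lemma is about the \emph{local} set $E'=\{\alpha:\exists\beta>\alpha,\ \Ext^1_R(B_\beta/B_\alpha,H_\beta/H_\alpha)\neq0\}$. Here the target $H=\bigoplus_{\gamma<\kappa}G_\gamma$ is filtered by $H_\beta=\bigoplus_{\gamma<\beta}G_\gamma$. Your set $E=\{\alpha:B/B_\alpha\text{ not Baer}\}$ is global, and $\alpha\in E$ does not give the witness you propose to glue. It says nothing about $B_{\alpha+1}/B_\alpha$, and in general it need not be witnessed by any $B_\beta/B_\alpha$ with $\beta<\kappa$. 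For instance, over $\mathbb Z$, take a non-free $\aleph_1$-free group of cardinality $\aleph_1$ filtered by countable pure subgroups: it has $0\in E$ although every $B_\beta/B_0$ is free.

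\textbf{The target.} The gluing also needs the target split along $\kappa$. Let $\psi:F\to H$ be a putative extension of the constructed map on $K=\ker(F\to B)$. The $\alpha$ with $\psi(F_\alpha)\subseteq H_\alpha$ form a club, and the obstruction planted at a point of $E'$ in that club is detected in $H_\beta/H_\alpha$. Maps into one undivided copy of $G_R^{(\kappa)}$ give no such control.

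\textbf{The repair.} Take $G_\gamma=G_R^{(\kappa)}$, so that $H\cong G_R^{(\kappa)}$ is regular-torsion and $H_\beta/H_\alpha\cong G_R^{(\kappa)}$. Then $\Ext^1_R(B,H)=0$ makes $E'$ nonstationary. For $\alpha$ in a club avoiding $E'$, \Cref{lem:test}(2) shows $B_\beta/B_\alpha$ is Baer for all $\beta>\alpha$. Nonstationarity of your $E$ then follows from Eklof's lemma along the thinned chain. No special presentation is needed: $F=R^{(B)}$, $F_\alpha=R^{(B_\alpha)}$ and $K_\alpha=K\cap F_\alpha$ already give projective $K_\beta/K_\alpha$, because $\pd_R(B_\beta/B_\alpha)\le1$.
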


\begin{proof}
The ``if'' direction is the Eklof lemma, applied to $\Ext_R^1(-,T)$ for each
$T\in\Tors$.

For the converse, let $B$ be Baer.  By assumption $B$ is flat, and by
\Cref{thm:GJ}, $\pd_RB\le1$.  We first record the tight system used in both
cardinal cases.  Let $\mathscr P$ be the family of pure submodules of $B$.  It
contains $0$ and $B$ and is closed under unions of chains.  If
$A\subseteq A'$ lie in $\mathscr P$, then $A$ is pure in $A'$, and the pure
exact sequence
\[
 0\longrightarrow A\longrightarrow A'\longrightarrow A'/A\longrightarrow0
\]
shows that $A'/A$ is flat, since $A'$ is flat.  Hence
$\pd_R(A'/A)\le1$ by \Cref{thm:GJ}.

We shall also use the following purification fact.  If $C$ is a subset of an
$R$-module $M$, then, because $R$ is countable, $C$ is contained in a pure
submodule generated by at most $\max\{|C|,\aleph_0\}$ elements.  Indeed, start
with the submodule generated by $C$, adjoin solutions in $M$ of all finite
systems of linear equations with constants in the submodule, and iterate this
construction $\omega$ times.  At every stage at most
$\max\{|C|,\aleph_0\}$ elements are added.  Applied in $B/A$ and pulled back
to $B$, this implies that if $A\in\mathscr P$ and $C\subseteq B$ is
countable, then there is $A'\in\mathscr P$ containing $A\cup C$ such that
$A'/A$ is countably generated.  Thus $\mathscr P$ is a tight system in the
sense used in \cite[Section~3]{EFS}.

We induct on the least infinite cardinal $\kappa$ for which $B$ is
$\kappa$-generated.  The case $\kappa=\aleph_0$ is immediate.

\smallskip
\noindent\emph{Case 1: $\kappa$ is regular uncountable.}
Fix a generating family $(b_\alpha)_{\alpha<\kappa}$ of $B$.  By recursion,
using the purification fact in the quotient at each successor stage, construct
a continuous chain
\[
 0=M_0\subseteq M_1\subseteq\cdots\subseteq M_\alpha\subseteq\cdots
 \qquad(\alpha<\kappa)
\]
of members of $\mathscr P$ such that $b_\alpha\in M_{\alpha+1}$,
$M_{\alpha+1}/M_\alpha$ is countably generated, and
$B=\bigcup_{\alpha<\kappa}M_\alpha$.  Since $\kappa$ is regular, each
$M_\alpha$ is generated by fewer than $\kappa$ elements.  Moreover every
successive quotient $M_{\alpha+1}/M_\alpha$ is flat, hence has projective
dimension at most one by \Cref{thm:GJ}.  Thus the chain satisfies the
hypotheses of \cite[Lemma~9]{EFS}.

Put
\[
 G_R=\bigoplus_{s\in\Reg(R)}R/sR,
 \qquad G_\alpha=G_R^{(\aleph_0)}\quad(\alpha<\kappa),
\]
and set
\[
 H_\beta=\bigoplus_{\alpha<\beta}G_\alpha,
 \qquad H=\bigoplus_{\alpha<\kappa}G_\alpha.
\]
Every element of $H$ has finite support and is annihilated by a finite product
of regular elements; hence $H\in\Tors$ and $\Ext_R^1(B,H)=0$.  By the
contrapositive of \cite[Lemma~9]{EFS}, the set
\[
 E=\left\{\alpha<\kappa:\text{ for some }\beta>\alpha,
 \ \Ext_R^1(M_\beta/M_\alpha,H_\beta/H_\alpha)\ne0\right\}
\]
is nonstationary.  Choose a club $C\subseteq\kappa$ disjoint from $E$ and
thin the chain to the indices in $C$.

Let $\alpha<\beta$ be two consecutive points of the thinned chain.  Since
$\alpha\notin E$,
\begin{equation}\label{eq:club-vanishing}
 \Ext_R^1(M_\beta/M_\alpha,H_\beta/H_\alpha)=0.
\end{equation}
Set
\[
 \mu=\max\{\aleph_0,|\beta\setminus\alpha|\}.
\]
Because $\beta<\kappa$ and $\kappa$ is regular, $\mu<\kappa$.  The quotient
$M_\beta/M_\alpha$ is $\mu$-generated: it is generated by lifts of countable
generating sets for the quotients $M_{\gamma+1}/M_\gamma$ with
$\alpha\le\gamma<\beta$.  Also
\[
 H_\beta/H_\alpha\cong G_R^{(\mu)}.
\]
Finally, $M_\alpha$ is pure in $M_\beta$, so $M_\beta/M_\alpha$ is flat and
therefore has projective dimension at most one.  Consequently
\eqref{eq:club-vanishing} and \Cref{lem:test}(2) imply that
$M_\beta/M_\alpha$ is Baer.  Its least infinite number of generators is less
than $\kappa$, so the induction hypothesis refines it by a filtration with
countably generated Baer factors.  Refining every successive quotient of the
club-thinned chain and concatenating the refinements gives the required
filtration of $B$.

\smallskip
\noindent\emph{Case 2: $\kappa$ is singular.}
Let $\mathcal C$ be a set of representatives of the isomorphism classes of
countably generated Baer $R$-modules.  Such a set exists; moreover every
member of $\mathcal C$ is countably presented because $R$ is countable.  Let
\[
 \mathcal F=\operatorname{Filt}(\mathcal C)
\]
be the class of modules admitting a filtration by members of $\mathcal C$.
The construction preceding \cite[Theorem~11]{EFS}, together with that theorem,
shows that the filtration class determined by a set of countably presented
modules satisfies the abstract hypotheses (a)--(e) of the singular
compactness theorem \cite[Theorem~10]{EFS}.  We verify explicitly the three
families required by that theorem.

For each infinite cardinal $\theta<\kappa$, put
\[
 \mathscr W_\theta=
 \{A\in\mathscr P: A\text{ is generated by at most }\theta\text{ elements}\}.
\]
First, $\mathscr W_\theta\subseteq\mathcal F$.  Indeed, if
$A\in\mathscr W_\theta$, then $B/A$ is flat because $A$ is pure in $B$; hence
$\pd_R(B/A)\le1$ by \Cref{thm:GJ}.  By \Cref{lem:tight-baer}, $A$ is Baer.
Since $A$ is generated by at most $\theta<\kappa$ elements, the induction
hypothesis yields $A\in\mathcal F$.

Second, $\mathscr W_\theta$ is closed under unions of increasing chains of
length $<\theta$.  Such a union is pure in $B$, and an infinite cardinal
$\theta$ times fewer than $\theta$ many generating sets of cardinality at
most $\theta$ still has cardinality at most $\theta$.

Third, $\mathscr W_\theta$ is cofinal for subsets of cardinality $<\theta$.
If $X\subseteq B$ and $|X|<\theta$, the purification construction above gives
a pure submodule $A$ containing $X$ and generated by at most
$\max\{|X|,\aleph_0\}\le\theta$ elements.  Hence $A\in\mathscr W_\theta$.

Thus conditions (1')--(3') of \cite[Theorem~10]{EFS} hold for every infinite
$\theta<\kappa$.  Singular compactness gives $B\in\mathcal F$, which is
precisely the desired filtration.  This completes the induction.
\end{proof}

We can now prove the main criterion.

\begin{theorem}[Countable-ring Baer splitting criterion]\label{thm:criterion}
Let $R$ be a countable commutative ring satisfying \textup{(RC)} and
\textup{(PE)} from \Cref{def:RCPE}.  Then every Baer $R$-module is projective.
\end{theorem}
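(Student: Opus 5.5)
We argue in two stages: first for countably generated Baer modules, using the Mittag--Leffler package, and then for arbitrary ones via the countable reduction.

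Let $B$ be Baer. By \Cref{prop:flat}, (RC) makes $B$ flat. By \Cref{thm:reduction}, $B$ has a continuous filtration whose factors $B_{\alpha+1}/B_\alpha$ are countably generated Baer modules. Each factor is projective, so every step splits and $B$ is a direct sum of the factors, hence projective. It therefore suffices to treat the case where $B$ is countably generated.

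So let $B$ be a countably generated Baer module. Since $R$ is countable, $B$ is countably presented, and by the above it is flat. By \cite[Proposition~3.1]{ABH}, $B=\varinjlim F_n$ for a countable direct system of finitely generated free modules. The aim is to verify that the tower $(\Hom_R(F_n,R))_n$ is Mittag--Leffler; then $B$ is projective by \Cref{lem:MLpackage}(4). We reach this in three moves.

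\begin{enumerate}[label=\textup{(\roman*)}]
\item For each $s\in\Reg(R)$, the module $(R/sR)^{(\mathbb N)}$ is regular-torsion, so $\Ext_R^1(B,(R/sR)^{(\mathbb N)})=0$. More usefully, $M=G_R=\bigoplus_s R/sR$ is regular-torsion, and so is $M^{(\mathbb N)}$, since each element has finite support and is killed by a finite product of regular elements. Hence $\Ext_R^1(B,G_R^{(\mathbb N)})=0$, and \Cref{lem:MLpackage}(1) shows that the tower with $G_R$ is Mittag--Leffler.
\item By \Cref{lem:MLpackage}(2), the tower with $\prod_{s}R/sR$ is Mittag--Leffler as well.
\item Condition (PE) says that $\mu_R$ embeds $R$ as a pure submodule of $\prod_s R/sR$. \Cref{lem:MLpackage}(3) then gives that the tower with $R$ is Mittag--Leffler.
\end{enumerate}

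The main subtlety is step (i): one needs the sum indexed by $\mathbb N$ of a regular-torsion module to remain regular-torsion, which is immediate, and one must apply part (1) with $M=G_R$ rather than with each cyclic summand separately, so that part (2) can pass to the product. Beyond that, everything is assembled from the quoted results.
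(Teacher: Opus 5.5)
Your proposal is correct and follows the paper's proof essentially step for step. You obtain flatness from (RC) and reduce to countably generated factors via \Cref{thm:reduction}; you then pass from $\Ext_R^1(B,G_R^{(\mathbb N)})=0$ to the Mittag--Leffler condition for $G_R$, then for $\prod_s R/sR$, then for $R$ through the pure embedding (PE), and conclude with \Cref{lem:MLpackage}(4) and the splitting of the filtration (the only difference is that you present the transfinite assembly before the countable case rather than after).
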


\begin{proof}
By \Cref{prop:flat}, every Baer module is flat.  Hence
\Cref{thm:reduction} reduces the problem to countably generated Baer modules.
Let $B$ be countably generated and Baer.  Because $R$ is countable, $B$ is
countably presented; it is flat by \Cref{prop:flat}.  By
\cite[Proposition~3.1]{ABH}, write
\[
 B\cong\varinjlim
 \left(F_1\xrightarrow{f_1}F_2\xrightarrow{f_2}\cdots\right)
\]
with the $F_n$ finitely generated free.

Put $G_R=\bigoplus_{s\in\Reg(R)}R/sR$.  The module
$G_R^{(\mathbb N)}$ is regular-torsion, so
\[
 \Ext_R^1(B,G_R^{(\mathbb N)})=0.
\]
By \Cref{lem:MLpackage}(1), the tower obtained from $G_R$ is
Mittag--Leffler.  By \Cref{lem:MLpackage}(2), so is the tower obtained from
\[
 P_R:=\prod_{s\in\Reg(R)}R/sR.
\]
Condition (PE) says that $R\subseteq P_R$ is pure.  Therefore
\Cref{lem:MLpackage}(3) shows that the tower obtained from $R$ is
Mittag--Leffler.  Finally, \Cref{lem:MLpackage}(4) gives that $B$ is
projective.

Thus every countably generated Baer module is projective.  By
\Cref{thm:reduction}, an arbitrary Baer module has a filtration with such
factors.  Every successor extension splits, so a transfinite induction shows
that the whole module is a direct sum of projective factors and hence is
projective.
\end{proof}

\begin{remark}\label{rem:subperfect}
No subperfectness assumption occurs in \Cref{thm:criterion}.  This is important
for the examples below: their total quotient rings are von Neumann regular but
not Artinian, hence are not perfect.
\end{remark}

\section{Eventually constant sequence rings}\label{sec:EC}

Let $D$ be a domain.  Define
\begin{equation}\label{eq:ED}
 \EC(D)=\left\{(a_n)_{n\ge1}\in D^{\mathbb N}:
            (a_n)\text{ is eventually constant}\right\}.
\end{equation}
Let $\mathbf 1=(1,1,\ldots)$ and let $e_n$ be the sequence having $1$ in the
$n$th coordinate and $0$ elsewhere.  Then
\[
 \EC(D)=D\mathbf1+\bigoplus_{n\ge1}De_n.
\]
For $N\ge1$, put
\[
 e_\infty^{(N)}=\mathbf1-e_1-\cdots-e_N,
 \qquad
 R_N=De_1\oplus\cdots\oplus De_N\oplus De_\infty^{(N)}.
\]
Then $R_N\cong D^{N+1}$ and $\EC(D)=\bigcup_NR_N$.

\begin{proposition}\label{prop:structure}
Let $R=\EC(D)$.
\begin{enumerate}[label=\textup{(\arabic*)}]
\item $R$ is reduced and
\[
 \Reg(R)=\{(a_n)\in R:a_n\ne0\text{ for all }n\}.
\]
\item Every prime ideal of $R$ is uniquely of one of the forms
\[
 \mathfrak p_{n,\mathfrak q}
   =\{a\in R:a_n\in\mathfrak q\},
 \qquad
 \mathfrak p_{\infty,\mathfrak q}
   =\{a\in R:a_\infty\in\mathfrak q\},
 \qquad \mathfrak q\in\Spec(D),
\]
where $a_\infty$ denotes the eventual value of $a$.  Consequently
$\dim R=\dim D$.  In particular, the minimal prime ideals are exactly
\[
 \mathfrak p_n:=\mathfrak p_{n,(0)}=\{a\in R:a_n=0\}\quad(n\ge1),
 \qquad
 \mathfrak p_\infty:=\mathfrak p_{\infty,(0)}
   =\bigoplus_{n\ge1}De_n,
\]
and $\Min(R)$ is countably infinite.
\item If $K=\operatorname{Frac}(D)$, then
\[
 T(R)\cong \EC(K).
\]
The ring $\EC(K)$ is von Neumann regular and is not Artinian.
\item With the subspace topology inherited from $\Spec(R)$,
$\Min(R)$ is homeomorphic to the one-point compactification of the discrete
space $\mathbb N$, with $\mathfrak p_\infty$ as the point at infinity.  In
particular, $\Min(R)$ is compact.
\end{enumerate}
\end{proposition}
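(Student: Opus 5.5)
The plan is to exploit the direct limit description $R=\EC(D)=\bigcup_N R_N$ with $R_N\cong D^{N+1}$, together with the evaluation homomorphisms $\pi_n:R\to D$, $a\mapsto a_n$, for $1\le n\le\infty$ (where $\pi_\infty(a)=a_\infty$). Each $\pi_n$ is a surjective ring homomorphism, and the family $(\pi_n)_{n\le\infty}$ is jointly injective.

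For (1), reducedness follows at once: if $a^k=0$ then $a_n^k=0$ in the domain $D$ for every $n$, so $a=0$. For regular elements, first suppose $a_n\ne0$ for all $n$. If $ab=0$, then $a_nb_n=0$ forces $b_n=0$ for every $n$, so $b=0$. Conversely, if $a_n=0$ for some finite $n$, then $ae_n=0$ with $e_n\ne0$. Note that $a_n\neq 0$ for all finite $n$ already forces $a_\infty\ne0$, since $a_\infty$ equals $a_n$ for large $n$.

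For (2), I would take a prime $\mathfrak p$ and split into two cases.

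\emph{Case A: some $e_n\notin\mathfrak p$.} Since $e_ne_m=0\in\mathfrak p$ for $m\ne n$, we get $e_m\in\mathfrak p$ for all $m\ne n$. Also $e_n(\mathbf 1-e_n)=0$, so $\mathbf 1-e_n\in\mathfrak p$. Hence $\mathfrak p\supseteq\ker\pi_n=(\mathbf 1-e_n)$, and $\mathfrak p=\pi_n^{-1}(\mathfrak q)$ for a prime $\mathfrak q$ of $D$.

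\emph{Case B: all $e_n\in\mathfrak p$.} Then $\mathfrak p\supseteq\bigoplus De_n=\ker\pi_\infty$, so $\mathfrak p=\pi_\infty^{-1}(\mathfrak q)$.

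Uniqueness of the description goes as follows. The index is recovered as the unique $n$ with $e_n\notin\mathfrak p$, or as $\infty$ if there is none, since $e_n\notin\mathfrak p_{m,\mathfrak q}$ if and only if $m=n$. Given the index, $\mathfrak q=\pi_n(\mathfrak p)$. Each $\pi_n$ induces an order-preserving bijection $\Spec D\cong V(\ker\pi_n)$, so chains transfer and $\dim R=\dim D$; the finitely many distinct quotient types all equal $D$. Minimal primes correspond to $\mathfrak q=(0)$, since these are contained in every other prime in the same fibre and the fibres are incomparable. The last point needs checking: $\mathfrak p_{n,\mathfrak q}\subseteq\mathfrak p_{m,\mathfrak q'}$ with $n\ne m$ is impossible by looking at $e_n$ versus $e_m$, and likewise against $\infty$ using $e_n$. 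Hence $\Min(R)=\{\mathfrak p_n\}\cup\{\mathfrak p_\infty\}$, which is countably infinite.

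For (3), localizing at $\Reg(R)$ gives a map $T(R)\to\EC(K)$, $a/s\mapsto(a_n/s_n)$. This is well defined and injective because $s$ is regular and the coordinatewise description applies. It is surjective because, given $x\in\EC(K)$, I can clear denominators using finitely many distinct coordinate values. Concretely, $x$ lies in $K e_1\oplus\cdots\oplus Ke_N\oplus Ke_\infty^{(N)}$, so I choose a common denominator $d_i$ in each of these $N+1$ slots and set $s=\sum_{i\le N}d_ie_i+d_\infty e_\infty^{(N)}\in\Reg(R)$. The ring $\EC(K)$ is von Neumann regular because for any $x$ the quasi-inverse $y_n=x_n^{-1}$ if $x_n\ne0$ and $y_n=0$ otherwise is again eventually constant. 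It is not Artinian because the chain $(e_1)\subsetneq(e_1,e_2)\subsetneq\cdots$ does not stabilize.

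For (4), the main obstacle is computing the topology. Basic open sets of $\Min(R)$ are $D(a)\cap\Min(R)=\{\mathfrak p_n: a_n\ne0\}$, together with $\mathfrak p_\infty$ exactly when $a_\infty\ne0$. Taking $a=e_n$ shows that each $\{\mathfrak p_n\}$ is open, so the finite points are isolated. If $a_\infty\ne0$, the basic set $D(a)$ containing $\mathfrak p_\infty$ contains all $\mathfrak p_n$ for large $n$, so it is cofinite. Conversely, taking $a=e_\infty^{(N)}$ shows that every cofinite set of the form $\{\mathfrak p_\infty\}\cup\{\mathfrak p_n:n>N\}$ is open. This matches the one-point compactification topology exactly, and compactness follows.
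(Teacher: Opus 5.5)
Your proposal is correct and follows the paper's proof essentially step for step. You use the same evaluation maps $\pi_n,\pi_\infty$ and the same case split on whether some $e_n\notin\mathfrak p$ for (2), the same common-denominator and coordinatewise quasi-inverse arguments for (3), and the same computation of $D(a)\cap\Min(R)$ for (4). One small correction: $e_n$ rules out $\mathfrak p_{\infty,\mathfrak q'}\subseteq\mathfrak p_{n,\mathfrak q}$, but the reverse inclusion $\mathfrak p_{n,\mathfrak q}\subseteq\mathfrak p_{\infty,\mathfrak q'}$ is excluded by $\mathbf 1-e_n$, which lies in $\mathfrak p_{n,\mathfrak q}$ but has eventual value $1$. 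That reverse direction is exactly what you need for the minimality of $\mathfrak p_\infty$ and to stop a chain climbing from a finite branch into the $\infty$ branch.
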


\begin{proof}
Since $R$ is a subring of the product of copies of the domain $D$, it is
reduced.  An element $a=(a_n)$ is a zero-divisor if one of its coordinates,
say $a_i$, is zero, because $ae_i=0$.  Conversely, if every $a_i\ne0$ and
$ab=0$, then $a_ib_i=0$ for every $i$, hence $b_i=0$ for every $i$.  This
proves (1).

Let $\pi_n:R\to D$, $a\mapsto a_n$, be evaluation at the $n$th coordinate,
and let $\pi_\infty:R\to D$ be the eventual-value map.  Both maps are
surjective.  Let $P\in\Spec(R)$.  If $e_n\notin P$ for some $n$, then
$e_n(1-e_n)=0$ and primality imply $1-e_n\in P$.  Hence
$\ker(\pi_n)=(1-e_n)R\subseteq P$, and therefore
$P=\pi_n^{-1}(\mathfrak q)$ for a unique $\mathfrak q\in\Spec(D)$.  If
$e_n\in P$ for every $n$, then
$\ker(\pi_\infty)=\bigoplus_{n\ge1}De_n\subseteq P$, so
$P=\pi_\infty^{-1}(\mathfrak q)$ for a unique
$\mathfrak q\in\Spec(D)$.  The two alternatives are disjoint and give all
prime ideals.  A chain of primes cannot move from one branch to another, and
within each branch inclusion is exactly inclusion of the corresponding primes
of $D$; hence $\dim R=\dim D$.

Taking $\mathfrak q=(0)$ gives the displayed minimal primes.  Alternatively,
minimality is also seen from
$R_{\mathfrak p_n}\cong K\cong R_{\mathfrak p_\infty}$.  For example,
$e_n\notin\mathfrak p_n$, so $e_n$ becomes $1$ after localization and all
other orthogonal idempotent components vanish; the surviving copy of $D$ is
localized at $D\setminus\{0\}$.  At $\mathfrak p_\infty$ the elements
$1-e_n$ become units and force $e_n=0$ for every $n$.  This proves (2).

Every fraction of eventually constant $D$-sequences is an eventually constant
$K$-sequence.  Conversely, a sequence in $\EC(K)$ has only finitely many
exceptional coordinates, so a common denominator gives a representation
$x/s$ with $x\in R$ and $s\in\Reg(R)$.  Thus $T(R)=\EC(K)$.  Coordinatewise,
for $x=(x_n)\in\EC(K)$ define $y_n=x_n^{-1}$ when $x_n\ne0$ and $y_n=0$
otherwise.  Then $y\in\EC(K)$ and $x=x^2y$, so $\EC(K)$ is von Neumann
regular.  It has the infinite family of nonzero orthogonal idempotents
$(e_n)$, hence it is not Artinian.  This proves (3).

Finally, for $f=(f_n)\in R$, the basic open set $D(f)\cap\Min(R)$ contains
$\mathfrak p_n$ exactly when $f_n\ne0$, and contains $\mathfrak p_\infty$
exactly when the eventual value $f_\infty$ is nonzero.  If $f_\infty=0$, then
$f$ has finite support, so $D(f)\cap\Min(R)$ is finite and avoids
$\mathfrak p_\infty$.  If $f_\infty\ne0$, then $f_n=f_\infty\ne0$ for all
large $n$, so the basic open contains $\mathfrak p_\infty$ and all but
finitely many $\mathfrak p_n$.  Conversely, for every finite set
$F\subseteq\mathbb N$, the element $1-\sum_{n\in F}e_n$ defines the basic
neighborhood
\[
 \{\mathfrak p_\infty\}\cup
 \{\mathfrak p_n:n\notin F\}
\]
of $\mathfrak p_\infty$.  This is precisely the one-point compactification
topology.  Thus (4) follows.
\end{proof}

The first condition of \Cref{thm:criterion} is automatic for $\EC(D)$.

\begin{proposition}\label{prop:RCED}
For every domain $D$, the ring $R=\EC(D)$ satisfies \textup{(RC)}.
\end{proposition}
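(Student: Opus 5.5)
The plan is to reduce (RC) to a coordinatewise construction with idempotents. Let $I=(x^{(1)},\ldots,x^{(k)})$ be a finitely generated ideal of $R=\EC(D)$. All finitely many generators lie in a single $R_N\cong D^{N+1}$ for $N$ large: choose $N$ beyond every position at which some generator deviates from its eventual value. Set $f_0=e_\infty^{(N)}$ and $f_n=e_n$ for $1\le n\le N$. These are orthogonal idempotents of $R$ with sum $\mathbf 1$, so
\[
 R=\bigoplus_{n\in\{1,\ldots,N,\infty\}}f_nR,\qquad f_nR\cong\EC(D)\text{ for }n=\infty\text{ and }f_nR\cong D\text{ otherwise}.
\]
Moreover
\[
 I=\bigoplus_n f_nI .
\]
On each finite coordinate $n\le N$, the ideal $f_nI$ is $I_n e_n$, where $I_n\subseteq D$ is the ideal generated by the $n$th coordinates of the generators. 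On the tail, every generator equals a constant $c_j$ times $f_0$, so $f_0I=I_\infty f_0R$, where $I_\infty=(c_1,\ldots,c_k)\subseteq D$.

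Next define the complement $K$ blockwise. Let $Z=\{n\in\{1,\ldots,N,\infty\}:I_n=0\}$ and put
\[
 K=\bigoplus_{n\in Z}f_nR .
\]
Then $I\cap K=0$, because the two ideals are supported on disjoint blocks of idempotents, and $J=I\oplus K$ is an internal direct sum.

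It remains to exhibit a regular element of $J$. For each $n\notin Z$, pick a nonzero $d_n\in I_n$. For $n\le N$ the element $d_ne_n$ lies in $f_nI$. For $n=\infty$ with $I_\infty\ne0$, the element $d_\infty f_0$ lies in $f_0I$; it is the sequence that is $0$ in positions $\le N$ and the constant $d_\infty\neq 0$ afterwards. For $n\in Z$, use $f_n\in K$. Summing gives
\[
 s=\sum_{n\notin Z}d_nf_n+\sum_{n\in Z}f_n\in J .
\]
Every coordinate of $s$ is nonzero, including all positions $>N$, so $s\in\Reg(R)$ by \Cref{prop:structure}(1).

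The only step needing care is the tail block. One must check that $f_0I$ really is $I_\infty f_0R$, i.e. that beyond $N$ each generator is literally constant. This is exactly why $N$ is chosen past all deviation positions; without that, the tail would not be a single $D$-coordinate and the choice of a single nonzero $d_\infty$ would fail. Everything else is routine bookkeeping with orthogonal idempotents.
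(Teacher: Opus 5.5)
Your proposal is correct and follows essentially the same route as the paper. Your complement $K=\bigoplus_{n\in Z}f_nR$ is the paper's $eR$, where $e$ is the sum of the idempotent components of $R_N$ on which the coordinate ideal vanishes, and your element $s$ is exactly the paper's regular element $a+e$.
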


\begin{proof}
Let $I=(x_1,\ldots,x_m)$ be a finitely generated ideal.  Choose $N$ such that
each $x_j$ belongs to $R_N\cong D^{N+1}$.  For
$i\in\{1,\ldots,N,\infty\}$, let $I_i$ be the corresponding coordinate ideal
of $D$.  For every nonzero $I_i$, choose $0\ne a_i\in I_i$.  Write
\[
 a_i=\sum_{j=1}^m c_{ij}(x_j)_i\qquad(c_{ij}\in D)
\]
for $i=1,\ldots,N$, and use analogous coefficients $c_{\infty j}$ for the
eventual coordinate when $I_\infty\ne0$ (put all these coefficients equal to
zero when the corresponding coordinate ideal is zero).  Define
\[
 r_j=\sum_{i=1}^N c_{ij}e_i+c_{\infty j}e_\infty^{(N)}\in R_N.
\]
Then
\[
 a:=\sum_{j=1}^m r_jx_j
 =\sum_{\substack{1\le i\le N\\ I_i\ne0}}a_ie_i
   +\begin{cases}
      a_\infty e_\infty^{(N)},& I_\infty\ne0,\\
      0,&I_\infty=0,
    \end{cases}
\]
so $a\in I$ and has exactly the prescribed nonzero coordinate components.

Let $e$ be the sum of those idempotent components among
$e_1,\ldots,e_N,e_\infty^{(N)}$ on which $I_i=0$.  Then $eI=0$.  Hence
$I\cap eR=0$: if $z\in I\cap eR$, then $z=ez$, while $ez=0$ because
$z\in I$.  Moreover, $a+e$ has no zero coordinate, so it is regular by
\Cref{prop:structure}(1).  Therefore $I\oplus eR$ is a regular ideal.
\end{proof}

We next establish the pure embedding.  We first isolate a separation lemma.

\begin{lemma}\label{lem:Dseparated}
Let $D$ be a Noetherian domain which is not a field and let $M$ be a finitely
generated $D$-module.  Then
\[
 \bigcap_{0\ne d\in D} dM=0.
\]
\end{lemma}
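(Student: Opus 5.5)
We reduce to a statement about maximal ideals and then use the Krull intersection theorem. Put $N=\bigcap_{0\ne d\in D}dM$; this is a submodule of $M$. It suffices to show $N_{\mathfrak m}=0$ for every maximal ideal $\mathfrak m$ of $D$. Since $D$ is not a field, every maximal ideal $\mathfrak m$ is nonzero; choose $0\ne d\in\mathfrak m$.

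For every $k\ge1$ the element $d^k$ is nonzero because $D$ is a domain. Hence
\[
 N\subseteq d^kM\subseteq\mathfrak m^kM\qquad\text{for all }k,
\]
so $N\subseteq\bigcap_k\mathfrak m^kM$. Localizing at $\mathfrak m$, which commutes with finite intersections and preserves inclusions, we obtain
\[
 N_{\mathfrak m}\subseteq\bigcap_k\mathfrak m^kM_{\mathfrak m}.
\]
By Krull's intersection theorem for the finitely generated module $M_{\mathfrak m}$ over the Noetherian local ring $D_{\mathfrak m}$, the right-hand side is zero. Thus $N_{\mathfrak m}=0$ for all $\mathfrak m$, and therefore $N=0$.

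There is one subtlety to watch: the intersection $\bigcap_k\mathfrak m^kM$ need not vanish in $M$ itself. The Krull theorem for $M$ only says that an element $x$ of this intersection satisfies $(1-a)x=0$ for some $a\in\mathfrak m$. This is why we argue locally. Concretely, if $x\in N$, then its image $x/1$ lies in $\bigcap_k\mathfrak m^kM_{\mathfrak m}=0$ for every $\mathfrak m$, so $x=0$. No further obstacle is expected; the key inputs are that the nonzero elements $d^k$ land in $\mathfrak m^k$, and that maximal ideals are nonzero because $D$ is not a field.
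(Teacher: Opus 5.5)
Your proof is correct and takes essentially the same route as the paper: a nonzero element of a (necessarily nonzero) maximal ideal $\mathfrak m$, its powers, and Krull's intersection theorem for $M_{\mathfrak m}$ over $D_{\mathfrak m}$. The only cosmetic difference is that the paper fixes a single $\mathfrak m\supseteq\Ann_D(x)$ at which $x/1\neq0$ and derives a contradiction, whereas you show $N_{\mathfrak m}=0$ at every maximal ideal and then invoke the local--global principle, which amounts to the same thing.
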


\begin{proof}
Suppose $0\ne x\in M$.  Choose a maximal ideal $\mathfrak m$ containing
$\Ann_D(x)$.  Then $x/1\ne0$ in $M_{\mathfrak m}$: otherwise some
$s\notin\mathfrak m$ would satisfy $sx=0$, forcing
$s\in\Ann_D(x)\subseteq\mathfrak m$, a contradiction.  Since $D$ is not a
field, $\mathfrak m\ne0$; choose $0\ne a\in\mathfrak m$.  If
$x\in\bigcap_{0\ne d}dM$, then $x\in a^nM$ for every $n\ge1$, so
\[
 x/1\in\bigcap_{n\ge1}a^nM_{\mathfrak m}.
\]
The Krull intersection theorem for the finitely generated module
$M_{\mathfrak m}$ over the Noetherian local ring $D_{\mathfrak m}$ gives
$\bigcap_na^nM_{\mathfrak m}=0$, a contradiction.
\end{proof}

\begin{proposition}\label{prop:PEED}
Let $D$ be a Noetherian domain which is not a field and put $R=\EC(D)$.  Then
$R$ satisfies \textup{(PE)}.
\end{proposition}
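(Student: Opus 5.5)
The plan is to verify purity of $\mu_R$ through its characterization by finite linear systems. A monomorphism $R\to P$ is pure if and only if every finite system of linear equations with coefficients and constants in $R$ that is solvable in $P$ is already solvable in $R$. Injectivity of $\mu_R$ is the special case of a single equation $0\cdot x=b$, so it is covered as well. For $P_R=\prod_sR/sR$, solvability in $P_R$ is the same as solvability modulo $sR$ for every $s\in\Reg(R)$. So fix a matrix $A\in R^{m\times k}$ and $b\in R^m$, and assume that $Ax\equiv b \pmod{sR}$ is solvable for every regular $s$. The goal is an exact solution in $R$.

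First I will reduce to coordinates. Choose $N$ with all entries of $A$ and $b$ in $R_N\cong D^{N+1}$. For $i\in\{1,\dots,N,\infty\}$, let $A_i,b_i$ be the images under $\pi_i$, the $i$th coordinate map or the eventual-value map. If for each such $i$ the system $A_iy=b_i$ has a solution $y^{(i)}\in D^k$, then a solution of $Ax=b$ in $R$ is obtained by putting $x=\sum_{i\le N}y^{(i)}e_i+y^{(\infty)}e_\infty^{(N)}\in R_N^k$. Indeed, coordinatewise this solves the system at $n\le N$ and at every $n>N$, where the equations coincide with $A_\infty y=b_\infty$. It therefore suffices to solve each $A_iy=b_i$ over $D$.

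Next I will use only the special regular elements $s=d\mathbf 1$ with $0\ne d\in D$; these are regular by \Cref{prop:structure}(1). A solution $x$ of $Ax\equiv b\pmod{dR}$ means $b=Ax+dz$ with $x\in R^k$ and $z\in R^m$. Applying the ring homomorphism $\pi_i$, which sends $dR$ into $dD$, gives $b_i=A_i\pi_i(x)+d\,\pi_i(z)$. Now let $M_i=D^m/A_iD^k$, a finitely generated $D$-module, and let $\bar b_i$ be the image of $b_i$ in $M_i$. The previous identity says $\bar b_i\in dM_i$ for every nonzero $d$. By \Cref{lem:Dseparated}, $\bigcap_{d\ne0}dM_i=0$, so $\bar b_i=0$, that is, $b_i\in A_iD^k$. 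This is exactly solvability of $A_iy=b_i$ over $D$, and the reduction above finishes the proof.

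I expect no serious obstacle. The only points needing care are the compatibility of the coordinate maps with the ideals $dR$, including the eventual-value map $\pi_\infty$, and the observation that testing with the constant regular elements $d\mathbf 1$ already suffices. The substantive input, separatedness of finitely generated modules over a Noetherian non-field domain, is supplied by \Cref{lem:Dseparated}; this is where the hypotheses on $D$ are used.
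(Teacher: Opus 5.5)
Your proof is correct and is essentially the paper's argument in equational form. The paper tests purity by tensoring with a finitely presented $M$ (your $M=R^m/AR^k$), reduces to $\bigcap_{0\ne d\in D} dM=0$ using the constant regular elements exactly as you do, and then decomposes $M$ over $D$ through $R_N$ into finitely generated pieces; your step of applying \Cref{lem:Dseparated} to each $D^m/A_iD^k$ and gluing along $e_1,\dots,e_N,e_\infty^{(N)}$ plays the same role, and it avoids needing that tensoring with $M$ commutes with products or the $D$-module structure of the tail $e_\infty^{(N)}R$.
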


\begin{proof}
To test purity it is enough to tensor \eqref{eq:mu} with finitely presented
$R$-modules.  If $M$ is finitely presented, tensoring with $M$ commutes with
products, and therefore
\[
 M\otimes_R\prod_{s\in\Reg(R)}R/sR
 \cong \prod_{s\in\Reg(R)}M/sM.
\]
Thus it is enough to show that, for every finitely presented $R$-module $M$,
the canonical map
\[
 M\longrightarrow \prod_{s\in\Reg(R)}M/sM
\]
is injective.  Its kernel is $\bigcap_{s\in\Reg(R)}sM$.  Since every nonzero
constant $d\in D$ is regular in $R$,
\begin{equation}\label{eq:intersection-bound}
 \bigcap_{s\in\Reg(R)}sM
 \subseteq
 \bigcap_{0\ne d\in D}dM.
\end{equation}

Choose a finite presentation of $M$.  All matrix entries occur in some
$R_N\cong D^{N+1}$.  Hence there is a finitely presented $R_N$-module $L$ such that
\[
 M\cong L\otimes_{R_N}R.
\]
Decompose
\[
 L=L_1\oplus\cdots\oplus L_N\oplus L_\infty
\]
along the $N+1$ central idempotents of $R_N$.  Each $L_i$ and $L_\infty$ is a
finitely generated $D$-module.  Moreover, as a $D$-module,
\[
 e_\infty^{(N)}R
 =De_\infty^{(N)}\oplus\bigoplus_{j>N}De_j,
\]
which is free of countable rank.  Therefore
\[
 M\cong L_1\oplus\cdots\oplus L_N
 \oplus L_\infty
 \oplus\bigoplus_{j>N}L_\infty
\]
as a $D$-module.  In particular, the underlying $D$-module of $M$ is a direct
sum of finitely generated $D$-modules.

For such a direct sum the intersection
$\bigcap_{0\ne d\in D}dM$ is zero: a nonzero element has finite support, and
\Cref{lem:Dseparated} applied to any nonzero component supplies a nonzero
$d$ for which the element does not lie in $dM$.  Thus the right-hand side of
\eqref{eq:intersection-bound} is zero.  Therefore the displayed map is
injective for every finitely presented $M$, which is exactly the purity of
\eqref{eq:mu}.
\end{proof}

We can now state the main family of counterexamples.

\begin{theorem}\label{thm:EDmain}
Let $D$ be a countable Noetherian domain which is not a field, and set
$R=\EC(D)$.  Then every Baer $R$-module is projective.  Nevertheless, $R$ is
reduced and $\Min(R)$ is infinite; hence $R$ is not $\tau_q$-semisimple.
\end{theorem}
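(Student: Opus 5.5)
The plan is to assemble \Cref{thm:EDmain} from the criterion of \Cref{thm:criterion} together with the structural results of \Cref{sec:EC}. The first step is to check that $R=\EC(D)$ is countable. Since $D$ is countable and $\EC(D)=\bigcup_N R_N$ with $R_N\cong D^{N+1}$, the ring $R$ is a countable union of countable sets, hence countable. An alternative route: every eventually constant sequence is determined by a finite initial segment together with its eventual value, so $R$ injects into $\bigcup_N D^{N+1}$.

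Next I verify the two ring-theoretic hypotheses. Condition (RC) holds by \Cref{prop:RCED}, which is valid for every domain. Condition (PE) holds by \Cref{prop:PEED}, which uses exactly that $D$ is Noetherian and not a field. With countability, (RC) and (PE) in hand, \Cref{thm:criterion} applies directly and shows that every Baer $R$-module is projective.

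For the negative part, \Cref{prop:structure}(1) shows that $R$ is reduced. \Cref{prop:structure}(2) shows that $\Min(R)=\{\mathfrak p_n:n\ge1\}\cup\{\mathfrak p_\infty\}$ is countably infinite; the $\mathfrak p_n$ are pairwise distinct because $e_n\in\mathfrak p_m$ exactly when $m\ne n$. By the characterization \eqref{eq:tauq-char-prelim}, a $\tau_q$-semisimple ring must have finitely many minimal primes, so $R$ is not $\tau_q$-semisimple. As a cross-check, one can use \Cref{prop:structure}(3): $T(R)\cong\EC(K)$ is not Artinian, hence not semisimple.

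There is no real obstacle at this stage. All the substantive work, namely the countable reduction of \Cref{thm:reduction} and the purity of $\mu_R$, has already been done. The only points needing care are that countability of $D$ is exactly what \Cref{thm:criterion} requires, and that the hypothesis ``not a field'' is what makes \Cref{lem:Dseparated}, and hence (PE), available.
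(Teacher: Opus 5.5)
Your proposal is correct and matches the paper's proof: you establish countability of $R=\EC(D)$, obtain (RC) from \Cref{prop:RCED} and (PE) from \Cref{prop:PEED}, and apply \Cref{thm:criterion}; reducedness and the infinite minimal spectrum come from \Cref{prop:structure}(1)--(2) together with \eqref{eq:tauq-char}. Your extra cross-check is also valid: $T(R)\cong\EC(K)$ is not Artinian, and since $\tau_q$-semisimplicity is defined by semisimplicity of the total quotient ring, this bypasses the characterization entirely.
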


\begin{proof}
The ring $R$ is countable.  By \Cref{prop:RCED} it satisfies (RC), and by
\Cref{prop:PEED} it satisfies (PE).  Thus \Cref{thm:criterion} shows that every
Baer $R$-module is projective.  The final assertions follow from
\Cref{prop:structure}(1)--(2) and the characterization
\eqref{eq:tauq-char}.
\end{proof}

When the coefficient domain is principal, the counterexamples have strong
finiteness properties.

\begin{corollary}\label{cor:PID}
Let $D$ be a countable principal ideal domain which is not a field and put
$R=\EC(D)$.  Then every Baer $R$-module is projective, while $R$ is reduced
with infinitely many minimal primes.  Moreover, $R$ is B\'ezout,
semihereditary, and coherent.
\end{corollary}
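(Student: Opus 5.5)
The first assertions follow from \Cref{thm:EDmain}, since a PID is Noetherian. It remains to prove that $R=\EC(D)$ is B\'ezout, semihereditary and coherent. The plan is to reduce every finitely generated ideal to the finite-level rings $R_N\cong D^{N+1}$ and then use that $D$ is a PID.

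\emph{B\'ezout.} Let $I=(x_1,\ldots,x_m)$ and choose $N$ with all $x_j\in R_N$. The ideal $I_0:=\sum_j R_Nx_j$ of $R_N$ decomposes along the idempotents $e_1,\ldots,e_N,e_\infty^{(N)}$ into coordinate ideals $I_i\subseteq D$. Each $I_i$ is principal, say $I_i=g_iD$, so $I_0$ is generated by the single element
\[
 g=\sum_{i\le N}g_ie_i+g_\infty e_\infty^{(N)}.
\]
Since $I=I_0R$, we get $I=gR$, and $R$ is B\'ezout.

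\emph{Semihereditary.} It suffices to show that every principal ideal $gR$ is projective. Let $e$ be the sum of the idempotent components among $e_1,\ldots,e_N,e_\infty^{(N)}$ at which $g$ vanishes. Then $\Ann_R(g)=eR$: an element killed by $g$ must vanish at every coordinate where $g$ is nonzero, because $D$ is a domain. At the $e_\infty^{(N)}$ component $g$ is constant, so this condition holds on all coordinates $j>N$ simultaneously. Consequently $gR\cong R/eR\cong(1-e)R$, which is a direct summand of $R$ and hence projective. Thus every finitely generated ideal is projective.

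\emph{Coherent.} A semihereditary ring is coherent, since finitely generated ideals are projective and therefore finitely presented. Alternatively, $R=\varinjlim R_N$ is a directed union of Noetherian rings along flat extensions. Indeed, $R$ is free over $R_N$ in a suitable sense, as the decomposition in the proof of \Cref{prop:PEED} shows, so $R_N\to R$ is flat and $R$ is coherent.

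The only real check is the annihilator computation at the eventual coordinate: the same idempotent $e_\infty^{(N)}$ must account for all tail coordinates $j>N$ at once. Everything else is routine.
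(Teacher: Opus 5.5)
Your proof is correct and follows essentially the paper's argument: reduce a finitely generated ideal to $R_N\cong D^{N+1}$ and use that $D$ is a PID to get a single generator $g$. Then compute $\Ann_R(g)=eR$ for an idempotent $e$, so that $gR\cong(1-e)R$ is projective, and deduce coherence from semihereditarity. The differences are cosmetic: you get $I=gR$ from the ideal structure of the finite product $R_N$ together with $I=I_0R$, where the paper divides coordinatewise; your alternative flatness argument for coherence is valid but not needed.
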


\begin{proof}
Only the last assertions are new.  Let $I=(x_1,\ldots,x_m)$ and choose $N$ so
that all generators lie in $R_N\cong D^{N+1}$.  Since $D$ is a PID, the
coordinate ideals are generated by elements
$d_1,\ldots,d_N,d_\infty$.  As in the proof of \Cref{prop:RCED}, the element
\[
 d=d_1e_1+\cdots+d_Ne_N+d_\infty e_\infty^{(N)}
\]
belongs to $I$.  For every $x\in I$, coordinatewise division by the $d_i$
produces an eventually constant element $r\in R$ with $x=dr$; on a zero
component both $d_i$ and the corresponding coordinate of $x$ are zero.  Thus
$I=dR$ and $R$ is B\'ezout.  If $e$ is the idempotent supported on the zero
components of $d$, then $\Ann_R(d)=eR$ and multiplication by $d$ identifies
$dR$ with $(1-e)R$.  Hence every finitely generated ideal is projective.
Thus $R$ is semihereditary and therefore coherent.
\end{proof}

The simplest instance is obtained from the integers.

\begin{corollary}\label{cor:Z}
Let
\[
 R=\EC(\mathbb Z)
 =\mathbb Z\mathbf1+\bigoplus_{n\ge1}\mathbb Ze_n.
\]
Then:
\begin{enumerate}[label=\textup{(\arabic*)}]
\item every Baer $R$-module is projective;
\item $R$ is reduced, B\'ezout, coherent and semihereditary;
\item $\Min(R)$ is countably infinite and compact;
\item $T(R)\cong\EC(\mathbb Q)$ is von Neumann regular but not semisimple
Artinian;
\item $R$ is not $\tau_q$-semisimple.
\end{enumerate}
\end{corollary}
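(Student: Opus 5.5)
The corollary follows by specializing \Cref{cor:PID} and \Cref{prop:structure} to $D=\mathbb Z$, which is countable, a principal ideal domain and not a field. Since $\EC(\mathbb Z)=\mathbb Z\mathbf1+\bigoplus_{n\ge1}\mathbb Ze_n$ by the description at the start of \Cref{sec:EC}, the ring in the statement is exactly $\EC(D)$ for this $D$. I would verify the items in order.

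Items (1) and (2) come directly from \Cref{cor:PID}. It gives that every Baer $R$-module is projective, and that $R$ is reduced, B\'ezout, semihereditary and coherent. Reducedness is also \Cref{prop:structure}(1).

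Item (3) follows from \Cref{prop:structure}(2), which lists the minimal primes as $\mathfrak p_n$ ($n\ge1$) together with $\mathfrak p_\infty$, so $\Min(R)$ is countably infinite. Compactness is \Cref{prop:structure}(4): $\Min(R)$ is homeomorphic to the one-point compactification of $\mathbb N$.

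Item (4) is \Cref{prop:structure}(3) with $K=\mathbb Q$. It gives $T(R)\cong\EC(\mathbb Q)$, a von Neumann regular ring that is not Artinian, and hence not semisimple Artinian.

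Item (5) follows from \eqref{eq:tauq-char}, since $\Min(R)$ is infinite. Equivalently, $T(R)$ is not semisimple by (4).

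No step presents a real obstacle. The only points to check are that the hypotheses of \Cref{cor:PID} hold for $\mathbb Z$, which is clear, and that the two descriptions of $R$ agree.
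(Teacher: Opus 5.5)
Your proposal is correct and matches the paper's proof: specialize \Cref{cor:PID} to $D=\mathbb Z$ for (1)--(2), read off (3)--(4) from \Cref{prop:structure}, and deduce (5) from \eqref{eq:tauq-char} because $\Min(R)$ is infinite. Your remark that (5) also follows directly from (4) is a valid shortcut, since $\tau_q$-semisimplicity is defined as semisimplicity of $T(R)$.
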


\begin{proof}
Apply \Cref{cor:PID} with $D=\mathbb Z$, together with
\Cref{prop:structure}(3)--(4).  The last assertion follows from
\eqref{eq:tauq-char} because $\Min(R)$ is infinite.
\end{proof}

The non-field hypothesis in \Cref{thm:EDmain} is essential for this family.

\begin{proposition}\label{prop:field}
Let $K$ be a field and $R=\EC(K)$.  Then every $R$-module is Baer, but not every
$R$-module is projective.
\end{proposition}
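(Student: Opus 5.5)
By \Cref{prop:structure}(1), applied with $D=K$, an element of $R=\EC(K)$ is regular exactly when all of its coordinates are nonzero. Since $K$ is a field, every such element is a unit of $R$: its coordinatewise inverse is again eventually constant. Hence $\Reg(R)=U(R)$. A regular-torsion module $T$ is then zero, because $st=0$ with $s$ a unit forces $t=0$. Consequently $\Ext_R^1(B,T)=0$ trivially for every $R$-module $B$ and every $T\in\Tors$, so every $R$-module is Baer.

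For the second assertion it suffices to exhibit one non-projective $R$-module. The natural candidate is the cyclic module $R/\mathfrak p_\infty$, where $\mathfrak p_\infty=\bigoplus_{n\ge1}Ke_n$; this module is isomorphic to $K$ via the eventual-value map $\pi_\infty$. The argument goes as follows:
\begin{enumerate}[label=\textup{(\arabic*)}]
\item A cyclic module $R/I$ is projective if and only if the surjection $R\to R/I$ splits, which happens if and only if $I$ is a direct summand of $R$, that is, $I=eR$ for some idempotent $e$.
\item Such an $I=eR$ is then finitely generated (indeed principal).
\item The ideal $\mathfrak p_\infty$ is not finitely generated. Any finite set of elements of $\mathfrak p_\infty$ has supports contained in $\{1,\dots,N\}$ for some $N$. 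The ideal these elements generate therefore lies in $Ke_1\oplus\cdots\oplus Ke_N$, and so it misses $e_{N+1}\in\mathfrak p_\infty$.
\end{enumerate}
Hence $\mathfrak p_\infty$ is not a direct summand of $R$, and $R/\mathfrak p_\infty$ is not projective.

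No step is a serious obstacle. The only point requiring a little care is the equivalence in (1), namely that a cyclic projective module $R/I$ forces $I$ to be a summand. This is just the splitting of $0\to I\to R\to R/I\to0$.
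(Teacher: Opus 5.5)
Your proof is correct and follows the paper's argument. You make the same observation that every regular element is a unit (so the only regular-torsion module is $0$), and you use the same non-projective module $R/J$ with $J=\mathfrak p_\infty=\bigoplus_{n\ge1}Ke_n$, reduced via splitting to showing $J$ is not a direct summand of $R$. The only difference is the final step: you rule out $J=eR$ by noting that $J$ is not finitely generated, whereas the paper does a case analysis on the eventual value ($0$ or $1$) of the idempotent $e$; both arguments are equally short and valid.
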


\begin{proof}
By \Cref{prop:structure}(1), a regular element of $R$ has no zero coordinate.
Its coordinatewise inverse is again eventually constant, so every regular
element is a unit.  Hence the only regular-torsion module is $0$, and every
module is Baer.

For an explicit nonprojective Baer module, let
\[
 J=\bigoplus_{n\ge1}Ke_n\subseteq R
\]
and consider $R/J$.  If $R/J$ were projective, then the exact sequence
$0\to J\to R\to R/J\to0$ would split, so $J$ would be a direct-summand ideal
of $R$, hence $J=eR$ for an idempotent $e\in R$.  Every idempotent of $R$ is
an eventually constant $0$--$1$ sequence.  If its eventual value is $0$, then
its support is finite, so $eR$ is supported on a fixed finite set and cannot
equal $J$.  If its eventual value is $1$, then $eR$ contains elements with
nonzero eventual value and again cannot equal $J$.  Thus $R/J$ is not
projective.  Since every $R$-module is Baer, it is the required explicit
nonprojective Baer module.
\end{proof}

Combining the last two results gives a sharp statement inside this natural
sequence-ring family.

\begin{corollary}\label{cor:dichotomy}
Let $D$ be a countable Noetherian domain and $R=\EC(D)$.  Then every Baer
$R$-module is projective if and only if $D$ is not a field.
\end{corollary}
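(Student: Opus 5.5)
The corollary is a two-sided statement, and each direction is already supplied by an earlier result. The plan is to split on whether $D$ is a field and invoke \Cref{thm:EDmain} in one case and \Cref{prop:field} in the other.

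\emph{Sufficiency.} Assume $D$ is not a field. The standing hypotheses give a countable Noetherian domain, so \Cref{thm:EDmain} applies verbatim and every Baer $\EC(D)$-module is projective. Behind that theorem, \Cref{prop:RCED} supplies condition (RC) and \Cref{prop:PEED} supplies condition (PE). The latter uses the non-field hypothesis through \Cref{lem:Dseparated}. Countability of $R=\EC(D)$ follows from countability of $D$, because $\EC(D)=D\mathbf 1+\bigoplus_n De_n$ is a countable union of the countable rings $R_N\cong D^{N+1}$. \Cref{thm:criterion} then gives the conclusion.

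\emph{Necessity.} We argue by contraposition: assume $D=K$ is a field. By \Cref{prop:field}, every regular element of $\EC(K)$ is a unit, so the class of regular-torsion modules is $\{0\}$ and every module is Baer. \Cref{prop:field} also exhibits $R/J$ with $J=\bigoplus_n Ke_n$ as a nonprojective module, since $J$ is not generated by an idempotent. Hence not every Baer module is projective. The countability and Noetherian hypotheses play no role in this direction; a field is trivially Noetherian.

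There is no real obstacle, since both directions are direct citations. The only point requiring a check is that the hypotheses of \Cref{thm:EDmain} match those of the corollary, and they do.
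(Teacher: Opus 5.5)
Your proposal is correct and is exactly the paper's argument: the corollary is stated as the combination of \Cref{thm:EDmain}, which handles the case where $D$ is not a field, and \Cref{prop:field}, where every module is Baer but $R/J$ is not projective. Your side remarks — countability of $\EC(D)$, and the fact that the countable and Noetherian hypotheses are not used when $D$ is a field — are accurate.
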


The main motivation was the proposed converse to the Baer splitting theorem
over $\tau_q$-semisimple rings.  We can now settle it decisively.

\begin{theorem}\label{thm:conjecturefalse}
The implication
\[
 \bigl[\text{every Baer $R$-module is projective}\bigr]
 \Longrightarrow
 \bigl[R\text{ is $\tau_q$-semisimple}\bigr]
\]
is false even when $R$ is assumed to be reduced.  It remains false even if
$R$ is additionally assumed coherent, B\'ezout, and semihereditary.
\end{theorem}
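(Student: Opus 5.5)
The plan is to exhibit one explicit counterexample, namely $R=\EC(\mathbb Z)$ from \Cref{cor:Z}, and to check that it has every required property while failing $\tau_q$-semisimplicity. No new argument is needed; the work is only to collect the earlier results and confirm that their hypotheses hold for $D=\mathbb Z$.

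First, $\mathbb Z$ is a countable principal ideal domain and is not a field. It is also Noetherian, so both \Cref{thm:EDmain} and \Cref{cor:PID} apply. From \Cref{thm:EDmain}, built on \Cref{thm:criterion} via \Cref{prop:RCED} and \Cref{prop:PEED}, every Baer $\EC(\mathbb Z)$-module is projective, so the hypothesis of the implication is satisfied.

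Second, I verify the extra ring-theoretic properties.
\begin{enumerate}[label=\textup{(\roman*)}]
\item Reducedness follows from \Cref{prop:structure}(1).
\item B\'ezoutness, semiheredity and coherence follow from \Cref{cor:PID}. Every finitely generated ideal $I$ is principal, $I=dR$, and $dR\cong(1-e)R$ for an idempotent $e$, so every finitely generated ideal is projective. Coherence then follows from semiheredity.
\end{enumerate}
Thus $R$ lies in the most restrictive class named in the statement.

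Third, I show the conclusion fails. By \Cref{prop:structure}(2), $\Min(R)$ is countably infinite, consisting of the primes $\mathfrak p_n$ together with $\mathfrak p_\infty$. By the characterization \eqref{eq:tauq-char} (reduced with finitely many minimal primes), $R$ is therefore not $\tau_q$-semisimple. One can also see this directly from \Cref{prop:structure}(3): $T(R)\cong\EC(\mathbb Q)$ contains infinitely many orthogonal idempotents, so it is not semisimple.

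Hence $R$ satisfies the antecedent and every added hypothesis but not the consequent, which refutes the implication. There is no real obstacle at this stage, since all the substantial work (flatness via (RC), the countable reduction, and the Mittag--Leffler argument via (PE)) was done in \Cref{thm:criterion} and its inputs. The only care needed is to confirm that the hypotheses of \Cref{thm:EDmain} (countable, Noetherian, not a field) and of \Cref{cor:PID} (PID) hold simultaneously for $D=\mathbb Z$.
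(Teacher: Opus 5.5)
Your proposal is correct and matches the paper's proof: the paper takes $R=\EC(\mathbb Z)$ and invokes \Cref{cor:Z}, which assembles the same results you cite. These are \Cref{thm:EDmain} and \Cref{cor:PID} for projectivity of Baer modules and for the B\'ezout, semihereditary and coherent properties, together with \Cref{prop:structure} and \eqref{eq:tauq-char} for reducedness and the failure of $\tau_q$-semisimplicity. Your extra check, that $T(R)\cong\EC(\mathbb Q)$ has infinitely many orthogonal idempotents and so is not semisimple, is also valid and avoids appealing to \eqref{eq:tauq-char}.
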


\begin{proof}
Use $R=\EC(\mathbb Z)$ and \Cref{cor:Z}.
\end{proof}

There is a further separation phenomenon.  Zhang and Qi proved that a
commutative ring is $\tau_q$-VN regular if and only if it is reduced and its
minimal spectrum is compact \cite{ZQflat}.  Therefore:

\begin{corollary}\label{cor:tauqVN}
The ring $R=\EC(\mathbb Z)$ is $\tau_q$-VN regular but not
$\tau_q$-semisimple, while every Baer $R$-module is projective.
\end{corollary}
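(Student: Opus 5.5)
The corollary combines three facts already available in the paper. I would verify the two ring-theoretic properties of $R=\EC(\mathbb Z)$ and then invoke the splitting result.

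\emph{Step 1: every Baer module is projective.} This is \Cref{cor:Z}(1), coming from \Cref{thm:EDmain} with $D=\mathbb Z$. Here $\mathbb Z$ is a countable Noetherian domain which is not a field. Nothing more is needed.

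\emph{Step 2: $R$ is not $\tau_q$-semisimple.} This is \Cref{cor:Z}(5). Alternatively, $R$ is reduced with countably infinitely many minimal primes by \Cref{prop:structure}(1)--(2), so \eqref{eq:tauq-char} rules out $\tau_q$-semisimplicity.

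\emph{Step 3: $R$ is $\tau_q$-VN regular.} I would apply the Zhang--Qi characterization \cite{ZQflat}: a ring is $\tau_q$-VN regular if and only if it is reduced and $\Min(R)$ is compact. Reducedness is \Cref{prop:structure}(1). Compactness of $\Min(R)$ is \Cref{prop:structure}(4), which identifies $\Min(R)$ with the one-point compactification of discrete $\mathbb N$, with $\mathfrak p_\infty$ as the point at infinity.

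The only point I would double-check is a convention in the Zhang--Qi theorem. Its "minimal spectrum" should carry the Zariski subspace topology, which is the topology used in \Cref{prop:structure}(4), rather than the flat or constructible topology.

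If the cited statement instead uses a condition such as "$T(R)$ is von Neumann regular", there is a fallback. \Cref{prop:structure}(3) gives $T(R)\cong\EC(\mathbb Q)$, which is von Neumann regular, so the hypothesis is satisfied directly.

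I expect no genuine obstacle. All three properties are already recorded, and the argument is simply to cite them together.
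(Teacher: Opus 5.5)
Your proposal is correct and follows the paper's proof: the paper likewise takes reducedness, compactness of $\Min(R)$ and the Baer splitting property from \Cref{cor:Z}. It then applies the Zhang--Qi characterization \cite{ZQflat} for $\tau_q$-VN regularity, and uses \eqref{eq:tauq-char} with $\Min(R)$ infinite to rule out $\tau_q$-semisimplicity. Your fallback via $T(R)\cong\EC(\mathbb Q)$ is not needed, since the cited characterization is exactly ``reduced with compact minimal spectrum.''
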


\begin{proof}
By \Cref{cor:Z}, $R$ is reduced and $\Min(R)$ is compact, so it is
$\tau_q$-VN regular by \cite{ZQflat}.  It is not $\tau_q$-semisimple because
$\Min(R)$ is infinite, by \eqref{eq:tauq-char}.  The Baer splitting assertion
is \Cref{cor:Z}(1).
\end{proof}

Thus the Baer splitting property is strictly weaker than
$\tau_q$-semisimplicity, even inside the reduced $\tau_q$-VN regular class.
The obstruction is not nilpotence and not noncoherence; rather, the example
shows that an infinite but compact minimal spectrum is compatible with the
Baer splitting property.

\end{document}